\newcommand{\Aff}{\mathbb{A}}
\newcommand{\ZZ}{\mathbb{Z}}
\newcommand{\KK}{\mathbb{K}}
\newcommand{\QQ}{\mathbb{Q}}
\newcommand{\CO}{\mathcal{O}}
\newcommand{\T}{\mathcal{T}}
\newcommand{\F}{\mathcal{F}}
\newcommand{\U}{\mathcal{U}}
\newcommand{\V}{\mathcal{V}}
\newcommand{\W}{\mathcal{W}}
\newcommand{\sing}{\mathrm{sing}}
\newcommand{\ch}{\check{H}}
\newcommand{\fun}{\mathrm{fun}}
\DeclareMathOperator{\conv}{conv}
\DeclareMathOperator{\spec}{Spec}
\DeclareMathOperator{\sign}{sign}
\DeclareMathOperator{\Hom}{Hom}
\newtheorem{thm}{Theorem}[section]
\newtheorem{prop}[thm]{Proposition}
\newtheorem{lemma}[thm]{Lemma}
\newtheorem{cor}[thm]{Corollary}
\newtheorem{question}[thm]{Question}
\theoremstyle{definition}
\newtheorem{defn}[thm]{Definition}
\newtheorem{rem}[thm]{Remark}
\title[Smooth Toric Varieties: Obstructions and the Cup Product]{Deformations of Smooth Complete Toric Varieties: Obstructions and the Cup Product}
\author{Nathan Ilten}
\address{Department of Mathematics, Simon Fraser University,
8888 University Drive, Burnaby BC V5A1S6, Canada}
\address{Mathematical Institute of the Polish Academy of Sciences, 
Sniadeckich 8, 00-656 Warszawa}
\email{nilten@sfu.ca}
\author{Charles Turo}
\address{Department of Mathematics, Simon Fraser University,
8888 University Drive, Burnaby BC V5A1S6, Canada}
\email{cturo@sfu.ca}
\begin{document}

\begin{abstract}
Let $X$ be a complete $\QQ$-factorial toric variety. We explicitly describe the space $H^2(X,\T_X)$ and the cup product map $H^1(X,\T_X)\times H^1(X,\T_X)\to H^2(X,\T_X)$ in combinatorial terms. Using this, we give an example of a smooth projective toric threefold for which the cup product map does not vanish, showing that in general, smooth complete toric varieties may have obstructed deformations.
\end{abstract}
\maketitle
\section{Introduction}
\subsection{Background and Motivation}
Let $X$ be any variety over an algebraically closed field $\KK$ of characteristic not equal to two or three. The deformation theory of $X$ provides useful information on how $X$ might fit into a moduli space. The abstract theory guarantees that in good situations (e.g. $X$ complete or $X$ an isolated singularity) $X$ will possess a versal deformation, from which all deformations of $X$ can be induced. However in practice, the versal deformation of a given variety may be very difficult to describe in its entirety.
It is thus interesting to study classes of varieties for which one may more explicitly understand the deformation theory.

One special class of varieties whose deformation theory has been studied are \emph{toric varieties}. Deformations of such varieties have applications ranging from mirror symmetry \cite{mavlyutov1,coats} to K\"ahler-Einstein and extremal metrics \cite{tipler,kstab}. The deformation theory of \emph{affine} toric varieties has been described extensively by Altmann. Combinatorial formulas exist for the tangent and obstruction spaces $T_X^1$ and $T_X^2$ as well as a combinatorial description of the cup product map \cite{altmann1,altmann2}, see also recent work by Filip \cite{matej}. A combinatorial recipe may be used to construct deformations of $X$ over affine space \cite{altmann3}, and in some cases (e.g.~isolated Gorenstein singularities) there is an explicit combinatorial description of the entire versal deformation \cite{altmann4}.

In this paper, we will instead continue the program initiated by the first author in \cite{ilten1} of describing the deformation theory of \emph{smooth, complete} toric varieties.  
Let $X$ be a smooth complete toric variety corresponding to a fan $\Sigma$.
In loc.~cit.~Ilten gave a combinatorial description of the space 
$T_X^1=H^1(X,\T_X)$ of first order deformations:
\begin{equation}\label{eqn:t1}	
H^1(X,\T_X)=\bigoplus_{\rho\in\Sigma^{(1)}} \bigoplus_{\substack{u\in M\\ \rho(u) =-1}} \widetilde{H}^0(\Gamma_{\rho,u},\KK)
\end{equation}
where $\rho$ ranges over all rays of the fan $\Sigma$, $M$ is the character lattice of the torus of $X$, $\rho(u)\in\ZZ$ denotes the pairing between the primitive generator of $\rho$ and $u$, and $\Gamma_{\rho,u}$ is a certain graph, see \S\ref{sec:obs}.
Here, $\widetilde{H}$ denotes reduced cohomology.

Generalizing Altmann's construction in the affine case, Ilten and Vollmert gave a recipe for producing deformations of any toric variety $X$ over affine spaces from combinatorial data \cite{ilten2}, see also work by Mavlyutov \cite{mavlyutov2} and Petracci \cite{petracci}. In particular, when $X$ is smooth and complete, each connected component of a graph $\Gamma_{\rho,u}$ appearing in \eqref{eqn:t1} gives rise to a one-parameter deformation (over $\Aff^1$) lifting the corresponding first order deformation in $T_X^1$ \cite[Theorem 6.5]{ilten2}. In fact, for any character $u\in M$, one may use this construction to produce a deformation over $\Aff^m$ whose image in $T_X^1$ spans the entire degree $u$ piece. This is evidence that, despite in general having non-vanishing obstruction spaces, smooth complete toric varieties might have unobstructed deformations, similar to the situation of e.g.~Calabi-Yau varieties \cite{tian,todorov}. However, we will see below that this is not the case.

\subsection{Results}
Throughout, $X$ will be a complete $\QQ$-factorial toric variety corresponding to a fan $\Sigma$ with character lattice $M$. 
The description \eqref{eqn:t1} of $H^1(X,\T_X)$ in the case $X$ smooth also holds when $X$ is only $\QQ$-factorial, see \S\ref{sec:obs}.
There is also a straightforward generalization of \eqref{eqn:t1} for $H^2(X,\T_X)$: 
\begin{prop}[Proposition \ref{prop:obs2}]\label{prop:obs}
	The cohomology group $H^2(X,\T_X)$ may be decomposed as
	\begin{equation}\label{eqn:t2}
H^2(X,\T_X)=\bigoplus_{\rho\in\Sigma^{(1)}} \bigoplus_{\substack{u\in M\\ \rho(u) =-1}} H^1(K_{\rho,u},\KK)
\end{equation}
where each $K_{\rho,u}$ is a simplicial complex determined from $\Sigma$, see \S\ref{sec:obs}.
\end{prop}

Our main result is then to give a combinatorial description of the cup product map
\[
H^1(X,\T_X)\times H^1(X,\T_X)\to H^2(X,\T_X)
\]
using \eqref{eqn:t1} and \eqref{eqn:t2}.
When $X$ is smooth, 
$H^2(X,\T_X)$ is the obstruction space $T_X^2$, and the cup product may be used to obtain the quadratic terms in the obstruction equations for the versal deformation of $X$.
To describe the cup product, we will use \v{C}ech cohomology (with respect to a closed covering) to describe elements of the cohomology groups $\widetilde H^0(\Gamma_{\rho,u},\KK)$ and $H^1(K_{\rho,u}, \KK)$. 
The closed covering we consider will be indexed by maximal cones $\sigma\in\Sigma$; the corresponding closed sets will be the intersections of $\sigma$ with either $\Gamma_{\rho,u}$ or $K_{\rho,u}$.

\begin{thm}[Theorem \ref{thm:main2}]\label{thm:main}
	Fix $\rho,\rho'\in\Sigma^{(1)}$ and $u,u'\in M$ with $\rho(u)=\rho'(u')=-1$.
	\begin{enumerate}
		\item The image of
\[\widetilde{H}^0(\Gamma_{\rho,u},\KK)\times \widetilde{H}^0(\Gamma_{\rho',u'},\KK)\]
in $H^2(X,\T_X)$ under the cup product via \eqref{eqn:t1} is $0$ unless $\rho(u')=0$ or $\rho'(u)=0$.
\item Assume that $\rho(u')=0$, and let
	$f=(f_\sigma)$ and $f'=(f_\sigma')$ be \v{C}ech zero-cycles of $\Gamma_{u,\rho}$ and $\Gamma_{u',\rho'}$. 
	Then the cup product of $(\overline f,\overline f')$ is contained in $H^1(K_{\rho,u+u'})$ via \eqref{eqn:t2} and may be represented by the \v{C}ech one-cocycle $g=(g_{\sigma\tau})$ where
	\[
		g_{\sigma\tau}=\begin{cases}
			\frac{\rho'(u)}{2}(f_{\sigma}f_{\tau}'-f_{\tau}f_{\sigma}') & K_{\rho,u+u'}\cap\sigma\cap\tau\neq\emptyset,\\
			0& \textrm{otherwise}.
	\end{cases}
\]
A similar formula holds when $\rho'(u)=0$.
\end{enumerate}
\end{thm}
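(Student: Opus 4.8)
The plan is to make the cup product computation completely explicit on the \v{C}ech level, reducing everything to sheaf-theoretic multiplication in $\T_X$. First I would recall that the decompositions \eqref{eqn:t1} and \eqref{eqn:t2} arise from a fixed \v{C}ech complex computed against the closed covering indexed by the maximal cones $\sigma\in\Sigma$. The tangent sheaf $\T_X$ carries an $M$-grading, and for a smooth or $\QQ$-factorial toric variety one has on each affine chart $U_\sigma$ an explicit description of the graded pieces $\T_X|_{U_\sigma}$ in terms of the rays of $\sigma$: in degree $u$ the contribution is governed by the rays $\rho$ with $\rho(u)<0$, and the generators may be taken to be the torus-invariant vector fields $\chi^u\cdot\partial_\rho$. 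The key bookkeeping observation is that a \v{C}ech zero-cocycle $f=(f_\sigma)$ representing a class in $\widetilde H^0(\Gamma_{\rho,u},\KK)$ corresponds to assigning to each maximal cone a coefficient for the vector field attached to $\rho$ in degree $u$, and similarly for $f'$.

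Next I would carry out the cup product as the composition of the \v{C}ech cup product $\check H^0\times\check H^0\to \check H^1$ with the multiplication map $\T_X\otimes\T_X\to \wedge^2\T_X\to\T_X$ (the Lie bracket / Schouten structure governing obstructions). Concretely, on a double intersection $U_\sigma\cap U_\tau$ the cup product of $f$ and $f'$ is represented by $f_\sigma\cdot f'_\tau$ times the product of the corresponding vector fields. I would then compute this product of torus-invariant vector fields in degree $u+u'$: multiplying $\chi^u\partial_\rho$ against $\chi^{u'}\partial_{\rho'}$ and antisymmetrizing produces the bracket, whose coefficient is exactly where the factors $\rho'(u)$ and the pairing conditions enter. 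This is the step where the hypotheses $\rho(u)=\rho'(u')=-1$ and the case split $\rho(u')=0$ versus $\rho'(u)=0$ become forced: the resulting class lives in degree $u+u'$ attached to a single ray, and for that contribution to be nonzero one needs $\rho(u+u')=-1$, i.e. $\rho(u')=0$ (symmetrically $\rho'(u)=0$), which proves part (1) by showing all other bidegrees map to zero. The coefficient $\rho'(u)/2$ and the antisymmetric combination $f_\sigma f'_\tau-f_\tau f'_\sigma$ then emerge directly from the bracket computation and the alternating nature of the \v{C}ech differential.

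For part (2) I would verify that the proposed $g=(g_{\sigma\tau})$ is genuinely a \v{C}ech one-cocycle for $K_{\rho,u+u'}$, i.e. that it is closed under the \v{C}ech differential and supported correctly, and that its class agrees with the image of the cup product. The cocycle condition reduces to the identity $\delta(f)=\delta(f')=0$ on the relevant simplicial complex together with the Leibniz-type expansion of $\delta(g)$, which telescopes because $f$ and $f'$ are themselves cocycles. I would check compatibility of supports by confirming that the vector-field product is regular precisely on the charts meeting $K_{\rho,u+u'}$, matching the ``otherwise $0$'' clause.

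The main obstacle I expect is not the formal cup-product machinery but the precise identification of the local generators of $\T_X$ and the correct sign and normalization constants. In particular, pinning down the factor $\tfrac{1}{2}$ requires care: it comes from symmetrizing the bilinear cup product (equivalently from the fact that $\check H^1$ is computed with an ordered-pair convention while the bracket is antisymmetric), and tracking it correctly through the wedge/bracket operation and through the passage from the full $M$-graded \v{C}ech complex to the reduced simplicial complexes $\Gamma_{\rho,u}$ and $K_{\rho,u+u'}$ is the delicate part. I would therefore isolate a single standard affine chart computation, establish the local multiplication rule for the invariant vector fields there with all constants, and only afterward glue the local computations across the covering to obtain the global \v{C}ech cocycle $g$.
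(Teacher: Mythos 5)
Your overall strategy---computing the cup product explicitly on the \v{C}ech level via the Lie bracket of the invariant derivations $\partial(\rho,u)$, which is where the coefficients $\rho(u')$ and $\rho'(u)$ enter---is the same route the paper takes, and your account of where the factor $\tfrac{1}{2}$ and the antisymmetrization come from is essentially right. But there is a genuine gap at the step you dismiss as ``check compatibility of supports.'' After the bracket computation you have a two-cocycle $\theta$ in $\check{C}^2(\U,\T_X)$ whose entries are multiples of $\rho(u')\partial(\rho',u+u')-\rho'(u)\partial(\rho,u+u')$. To prove part (1) you need to know this class lies in the $\rho$ and $\rho'$ summands of $\bigoplus_\epsilon H^2(X,\CO(D_\epsilon))_{u+u'}$, and to prove part (2) you need to run the connecting homomorphism of \eqref{eqn:scohom} backwards; both require exhibiting an explicit cocycle lift $\kappa\oplus\kappa'$ of $\theta$ along $\eta$, i.e.\ showing that the obvious candidate $\kappa_{\sigma\tau\gamma}$, a combinatorial coefficient times $\chi^{u+u'}$, is a \emph{regular} section of $\CO(D_\rho)$ on every triple overlap $U_{\sigma\cap\tau\cap\gamma}$. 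This is false for the monomial $\chi^{u+u'}$ alone; it holds only because whenever $\chi^{u+u'}$ violates condition \eqref{eqn:div} on some ray of $\sigma\cap\tau\cap\gamma$, the combinatorial coefficient is forced to vanish. Establishing that dichotomy is the heart of the paper's argument (Lemma \ref{lem:maptodh2}): one reduces by bilinearity to the basis cocycles $f(Z)$, $f'(Z')$ attached to connected components and uses convexity of $Z$, $Z'$ inside the cones to rule out $\epsilon(u)<0$ or $\epsilon(u')<0$ for the rays $\epsilon$ of the triple intersection. Nothing in your outline supplies this.

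Relatedly, your justification that $g$ is a one-cocycle of $K_{\rho,u+u'}$ --- that $\delta(g)$ ``telescopes because $f$ and $f'$ are cocycles'' --- does not work. The cochain $\tilde g_{\sigma\tau}=\frac{\rho'(u)}{2}(f_\sigma f'_\tau-f_\tau f'_\sigma)$ on the point-cover $\W$ has $d^1(\tilde g)$ genuinely nonzero; $g$ is closed only because $d^1(\tilde g)$ lands in the subgroup $\check{C}^2(\U,\CO(D_\rho))_{u+u'}\subset\check{C}^2(\W,\KK)$ and hence dies in the quotient $\check{C}^2(\V,\KK)$ --- which is again exactly the regularity statement above. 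Similarly, framing the computation as a cup product $\check{H}^0\times\check{H}^0\to\check{H}^1$ on the simplicial complexes is not justified as stated: the product being computed is $H^1(X,\T_X)\times H^1(X,\T_X)\to H^2(X,\T_X)$, and transporting it down to $H^1(K_{\rho,u+u'},\KK)$ goes through the connecting homomorphism applied to the lift $\kappa$, not through a product defined directly on zero-cochains. Once the analogue of Lemma \ref{lem:maptodh2} is in place, your remaining steps go through.
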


While this theorem gives an explicit description of the cup product on the combinatorial level, it is perhaps not always immediately obvious when the one-cocycle $(g_{\sigma\tau})$ is non-trivial. To remedy this, we proceed as follows.
Assume as in the second part of the theorem that $\rho(u')=0$.
Consider any simple cycle $\alpha$ in $K_{\rho,u+u'}$, and connected components $Z$ and $Z'$ of $\Gamma_{\rho,u}$ and $\Gamma_{\rho',u'}$. Then $H^1(\alpha,\KK)=\KK$ with canonical generator $\alpha_\fun$, and $Z$ and $Z'$ induce elements of $\widetilde H^0(\Gamma_{\rho,u},\KK)$ and $\widetilde H^0(\Gamma_{\rho',u'},\KK)$. The pullback of the cup product of these elements to $H^1(\alpha,\KK)$ is $(Z*_{\alpha}Z')\cdot \alpha_\fun$, where $Z*_{\alpha}Z'$ is determined from the intersection behaviour of $Z$ and $Z'$ along $\alpha$, see \S\ref{sec:cyclethm} and Theorem \ref{thm:cycle} for a precise statement. 

This leads to a straightforward method to determine when the cup product vanishes. In particular, we may easily use this to construct examples of smooth toric threefolds where the cup product does not vanish:

\begin{cor}[Corollary \ref{cor:ob2}]\label{cor:obs}
There exists a smooth complete toric threefold with obstructed deformations.
\end{cor}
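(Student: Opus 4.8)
The plan is to reduce the statement to the nonvanishing of a single cup product square and then exhibit an explicit fan realizing it. Recall from the discussion preceding the corollary that for $X$ smooth the space $H^2(X,\T_X)$ is the obstruction space $T_X^2$, and the quadratic part of the obstruction equations defining the versal base inside $H^1(X,\T_X)$ is governed by the cup product. Consequently, if there is a class $\xi\in H^1(X,\T_X)$ whose self cup product $\xi\cup\xi$ is nonzero in $H^2(X,\T_X)$, then the primary obstruction does not vanish and $X$ has obstructed deformations. It therefore suffices to construct one smooth complete toric threefold together with such a $\xi$.

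First I would arrange the degrees so that the self cup product is forced to be a single cross term. By Theorem~\ref{thm:main}(1), the cup product of two homogeneous classes sitting in the same summand $\widetilde H^0(\Gamma_{\rho,u},\KK)$ with $\rho(u)=-1$ vanishes, since nonvanishing would require $\rho(u)=0$. Hence I take $\xi=\xi_1+\xi_2$ with $\xi_1\in\widetilde H^0(\Gamma_{\rho,u},\KK)$ and $\xi_2\in\widetilde H^0(\Gamma_{\rho',u'},\KK)$ for two distinct rays $\rho\neq\rho'$, with $\rho(u)=\rho'(u')=-1$ and $\rho(u')=0$. Then $\xi_1\cup\xi_1=\xi_2\cup\xi_2=0$, and by Theorem~\ref{thm:main}(2) the remaining contribution $\xi\cup\xi=2\,\xi_1\cup\xi_2$ lands in the single summand $H^1(K_{\rho,u+u'},\KK)$. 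In particular no cancellation between distinct summands can occur, and the whole problem reduces to producing a fan for which $\xi_1\cup\xi_2\neq0$.

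To detect this nonvanishing without computing in $H^1(K_{\rho,u+u'},\KK)$ directly, I would use the cycle pullback criterion of Theorem~\ref{thm:cycle}. Choosing connected components $Z$ and $Z'$ of $\Gamma_{\rho,u}$ and $\Gamma_{\rho',u'}$ (which determine $\xi_1$ and $\xi_2$) together with a simple cycle $\alpha$ in $K_{\rho,u+u'}$, the pullback of $\xi_1\cup\xi_2$ to $H^1(\alpha,\KK)=\KK$ equals $(Z*_\alpha Z')\cdot\alpha_\fun$. Thus it is enough to find a fan, degrees, components, and a cycle for which the intersection number $Z*_\alpha Z'$ is nonzero: such a fan automatically has $\xi_1\cup\xi_2\neq0$ in $H^2(X,\T_X)$, and hence obstructed deformations.

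The main obstacle is the explicit construction itself. One must write down a three-dimensional fan $\Sigma$ that is simultaneously complete and smooth---so that every maximal cone is unimodular---while still carrying enough combinatorial structure in the relevant degrees: a graph $\Gamma_{\rho,u}$, a graph $\Gamma_{\rho',u'}$, and a simplicial complex $K_{\rho,u+u'}$ containing a simple cycle $\alpha$ along which the components $Z$ and $Z'$ interleave so that the signed count $Z*_\alpha Z'$ does not cancel. Because smooth complete toric threefolds are rather rigid, balancing unimodularity and completeness against this nontrivial topology is delicate; verifying that the candidate fan is smooth and complete and carrying out the hand computation of the graphs, the complex $K_{\rho,u+u'}$, and the intersection number $Z*_\alpha Z'$ constitute the bulk of the work. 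Once a fan passing all these checks is exhibited, the corollary follows immediately from the reduction above.
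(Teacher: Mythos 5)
Your reduction is sound and matches the paper's strategy exactly: pass to a class $\xi=\xi_1+\xi_2$ supported on two summands $\widetilde H^0(\Gamma_{\rho,u},\KK)$ and $\widetilde H^0(\Gamma_{\rho',u'},\KK)$ with $\rho(u')=0$, note that the self-products of $\xi_1$ and $\xi_2$ vanish by Theorem \ref{thm:main2}(1) (since $\rho(u)=\rho'(u')=-1\neq 0$), so that $\xi\cup\xi=2\,\xi_1\cup\xi_2$ lands in the single summand $H^1(K_{\rho,u+u'},\KK)$, and then detect nonvanishing by pulling back along a $\Sigma$-reduced cycle via Theorem \ref{thm:cycle}. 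This part is, if anything, slightly more explicit than the paper's own phrasing about why a nonzero cross term yields an obstructed deformation.

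However, there is a genuine gap: you never produce the fan. Corollary \ref{cor:ob2} is an existence statement, and its entire content is the explicit example; your proposal explicitly defers ``the bulk of the work'' and gives no evidence that a smooth complete fan with the required interleaving exists. This is not a formality --- one must exhibit rays and unimodular maximal cones forming a complete fan, locate degrees $u,u'$ and rays $\rho\neq\rho'$ with $\rho(u)=\rho'(u')=-1$ and $\rho(u')=0$ for which both $\Gamma_{\rho,u}$ and $\Gamma_{\rho',u'}$ are disconnected (otherwise the relevant $\widetilde H^0$ vanish), and find a simple cycle $\alpha$ in $K_{\rho,u+u'}$ along which the chosen components interleave with a nonzero signed count. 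The paper does this with a fan on nine rays and fourteen maximal cones, taking $u=(-1,0,0)$, $u'=(0,-1,0)$, $\rho=\rho_1$, $\rho'=\rho_6$, the component $Z$ of $\Gamma_{\rho_1,u}$ containing $\rho_2,\rho_3,\rho_4,\rho_5$, the component $Z'=\{n_{\rho_7}\}$ of $\Gamma_{\rho_6,u'}$, and a hexagonal $\Sigma$-reduced cycle $\alpha$ on which exactly two relevant indices each contribute $b_i=1$, giving $Z*_\alpha Z'=1\neq 0$. Without such a verified example (or at least an argument that one must exist), the proof is incomplete.
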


\subsection{Murphy's Law and Future Directions}
Is this obstructedness result (Corollary \ref{cor:obs}) surprising? We would argue that although perhaps not surprising, it is far from obvious. On the one hand, Vakil has shown
\emph{Murphy's Law} for several classes of deformation problems, that is,
that arbitrarily bad singularities of finite type over $\ZZ$ can occur in the versal deformations \cite{vakil}. For example, this is true for smooth projective $n$-folds ($n\geq 2$) with ample canonical class.
Vakil writes that his results suggest that ``unless there is some natural reason for the [deformation] space to be
well-behaved, it will be arbitrarily badly behaved.''

On the other hand, toric varieties are so special that there may well be a natural reason for the deformation space to be well-behaved.
In fact, Murphy's Law is false for smooth toric varieties! This follows e.g.~from \cite[Theorem 6.5]{ilten2}, which implies in particular that the versal deformation space of a smooth complete toric variety cannot be a fat point.

This means that the deformation theory of smooth complete toric varieties may belong to the small class of deformation problems which are obstructed, yet one can still hope to completely describe in some explicit manner.
The next natural question to address is:
\begin{question}
Is the versal deformation of a smooth complete toric variety cut out by quadrics?
\end{question}
\noindent In fact, if we knew that the versal deformation was cut out by quadrics, then our results here would completely determine those equations. At the moment, we have far too little evidence to posit an answer one way or the other.

\vspace{.5cm}

The remainder of this paper is organized as follows. In \S \ref{sec:prelim}, we recall basic facts of \v{C}ech cohomology and toric geometry. In \S \ref{sec:obs}, we prove Proposition \ref{prop:obs}, describing $T^2_X$ combinatorially. The main work of this paper is contained in \S \ref{sec:cup}, where we prove our combinatorial description of the cup product (Theorem \ref{thm:main}). In \S\ref{sec:cycle} we show how the cup product pulls back to simple cycles $\alpha$. Finally, in \S \ref{sec:ex}, we present an example of an obstructed smooth toric threefold, proving Corollary \ref{cor:obs}.

\section{Preliminaries}\label{sec:prelim}
\subsection{\v{C}ech Cohomology}\label{sec:cech}
We begin by recalling basics of \v{C}ech cohomology and fixing notation. See e.g.~\cite[\S7.6]{bosch} for more details. Let $X$ be a topological space and $\U=\{U_i\}_{i\in I}$ either an open or closed cover of $X$. For any sheaf $\F$ of abelian groups on $X$, the group 
	$\check{C}^p_\sing(\U,\F)$
of singular $p$th \v{C}ech cochains is
\[
\check{C}^p_\sing(\U,\F)
=	\prod_{(i_0,\ldots, i_p)\in I^{p+1}}\F(U_{i_0}\cap\ldots\cap U_{i_p}).
\]
The differential 
$d^p:\check{C}^p_\sing(\U,\F)\to \check{C}^p_\sing(\U,\F)$
is defined by $d^p(f)=g$, where 
\[
	g_{i_0\ldots i_{p+1}}=\sum_{k=0}^{p+1}(-1)^k (f_{i_0\ldots \widehat{i_k} \ldots i_{p+1}})_{|U_{i_0}\cap\ldots\cap U_{i_{p+1}}}.
\]
The $p$th singular \v{C}ech cohomology group of $\F$ with respect to the cover $\U$ is the $p$th cohomology $\ch_\sing^p(\U,\F)$ of the complex $(\check{C}^\bullet_\sing,d^\bullet)$.
Elements of the kernel of $d^p$ are called singular \v{C}ech cocycles.

It is more common to work with either \emph{alternating} or \emph{ordered} \v{C}ech cohomology, since these have bounded length and involve fewer terms. We will opt to consistently work with alternating \v{C}ech cohomology: if we do not explicitly specify that we are talking about \emph{singular} \v{C}ech cohomology, then we are referring to alternating \v{C}ech cohomology. This is defined as follows.

The group of (alternating) $p$th \v{C}ech cochains $\check{C}^p(\U,\F)$ is the subgroup of $\check{C}_\sing^p(\U,\F)$ consisting of elements $f$ satisfying
\[
	f_{i_{\pi(0)}\ldots i_{\pi(p)}}=\sign(\pi)f_{i_0\ldots i_p}
\]
for any permutation $\pi\in S_{p+1}$ of $0,\ldots,p$, and 
\[
	f_{i_0\ldots i_p}=0
\]
if any index $i_j$ is repeated. After eliminating terms with doubled indices, the differential $d^p$ on the singular \v{C}ech complex also gives a differential on the subcomplex $\check{C}^\bullet(\U,\F)$.
The $p$th (alternating) \v{C}ech cohomology group of $\F$ with respect to $\U$ is the $p$th cohomology $\ch^p(\U,\F)$ of this subcomplex. Elements of $\check{C}^p(\U,\F)$ in the kernel of $d^p$ are called (alternating) \v{C}ech cocycles.

The inclusion of complexes $\check{C}^\bullet(\U,\F)\hookrightarrow \check{C}_\sing^\bullet(\U,\F)$ induces homomorphisms of cohomology groups $\ch^p(\U,\F)\to \ch_\sing^p(\U,\F)$. In fact, on the level of cohomology, these maps are isomorphisms, see \cite[\S7.6 Lemma 1]{bosch}. For our purposes, we need a map $\check{C}_\sing^\bullet(\U,\F)\to\check{C}^\bullet(\U,\F)$ which on cohomology induces the inverse of this isomorphism:
\begin{lemma}\label{lemma:phi}
	Assume that $\F$ is a sheaf of $\QQ$-modules.
	The maps
\[
\phi^p:\check{C}_\sing^p(\U,\F)\to\check{C}^p(\U,\F)
\]
defined by
\[
	\phi^p(f)_{i_0\ldots i_p}=\frac{1}{(p+1)!}\sum_{\pi\in S_{p+1}}\sign(\pi)f_{i_{\pi(0)}\ldots i_{\pi(p)}}
\]
give a homormophism of complexes. The induced map on cohomology is an isomorphism inverse to the map induced by the inclusion of $\check{C}^\bullet$ in $\check{C}_\sing^\bullet$.
\end{lemma}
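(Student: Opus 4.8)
The plan is to verify three concrete properties of the maps $\phi^p$ and then combine them with the isomorphism $\ch^\bullet(\U,\F)\to\ch^\bullet_\sing(\U,\F)$ from \cite[\S7.6 Lemma 1]{bosch}. Write $\iota^\bullet\colon\check{C}^\bullet(\U,\F)\hookrightarrow\check{C}^\bullet_\sing(\U,\F)$ for the inclusion. The three properties are: (i) each $\phi^p(f)$ really lies in the alternating subcomplex $\check{C}^p(\U,\F)$; (ii) $\phi^\bullet$ commutes with the differentials, so that it is a homomorphism of complexes; and (iii) $\phi^\bullet\circ\iota^\bullet=\id$ on $\check{C}^\bullet(\U,\F)$. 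Granting these, the conclusion is immediate: by (ii) the induced map $\phi^p_*$ on cohomology is defined, by (iii) it satisfies $\phi^p_*\circ\iota^p_*=\id$, and since $\iota^p_*$ is an isomorphism by \cite{bosch}, it follows that $\phi^p_*=(\iota^p_*)^{-1}$.

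For (i), the antisymmetry of $\phi^p(f)$ under a permutation $\tau$ follows by reindexing: substituting $\sigma=\tau\pi$ and using $\sign(\tau\pi)=\sign(\tau)\sign(\pi)$, together with the fact that $\pi\mapsto\tau\pi$ is a bijection of $S_{p+1}$, gives $\phi^p(f)_{i_{\tau(0)}\ldots i_{\tau(p)}}=\sign(\tau)\phi^p(f)_{i_0\ldots i_p}$. Vanishing on tuples with a repeated index is then forced: applying the transposition swapping the two equal positions leaves the tuple unchanged while multiplying the value by $-1$, so the value equals its own negative and hence is zero (here the hypothesis that $\F$ is a sheaf of $\QQ$-modules, so that $2$ is invertible, is used). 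For (iii), for an already-alternating $f$ each summand equals $\sign(\pi)^2 f_{i_0\ldots i_p}=f_{i_0\ldots i_p}$, and dividing the resulting $(p+1)!\,f_{i_0\ldots i_p}$ by $(p+1)!$ recovers $f$.

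The main obstacle is (ii), the chain-map identity $d^p\circ\phi^p=\phi^{p+1}\circ d^p$. I would prove it by expanding both sides as $\QQ$-linear combinations of the singular terms $f_{i_{a_0}\ldots i_{a_p}}$, where $(a_0,\ldots,a_p)$ runs over the injective sequences of length $p+1$ drawn from $\{0,\ldots,p+1\}$, and then checking that the coefficient of each such term agrees. Fix such a sequence and let $b\in\{0,\ldots,p+1\}$ be the unique omitted index. On the right-hand side, for each $k$ there is exactly one $\pi\in S_{p+2}$ with $\pi(k)=b$ whose remaining values list $(a_0,\ldots,a_p)$ in order; writing $\pi_k$ for it, one verifies $\pi_k=\pi_0\circ\gamma_k$ where $\gamma_k$ is the $(k+1)$-cycle $0\mapsto 1\mapsto\cdots\mapsto k\mapsto 0$. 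Hence $\sign(\pi_k)(-1)^k=\sign(\pi_0)$ is independent of $k$, and summing the $p+2$ equal contributions yields coefficient $\sign(\pi_0)/(p+1)!$. On the left-hand side only the $k=b$ summand of $d^p(\phi^p f)$ can produce this sequence, contributing $(-1)^b\sign(\sigma)/(p+1)!$, where $\sigma$ carries the increasing arrangement of $\{0,\ldots,p+1\}\setminus\{b\}$ to $(a_0,\ldots,a_p)$.

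The two coefficients therefore match once one establishes $\sign(\pi_0)=(-1)^b\sign(\sigma)$. This last identity is obtained by factoring $\pi_0$, which sends $(0,\ldots,p+1)$ to $(b,a_0,\ldots,a_p)$, through the intermediate arrangement $(b,s_0,\ldots,s_p)$, where $(s_0,\ldots,s_p)$ is the increasing listing of $\{0,\ldots,p+1\}\setminus\{b\}$: passing from $(b,s_0,\ldots,s_p)$ to $(b,a_0,\ldots,a_p)$ applies $\sigma$ to the last $p+1$ entries, while $(b,s_0,\ldots,s_p)$ itself has sign $(-1)^b$ since $b$ must hop over exactly the $b$ smaller indices. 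Combining the two computations verifies (ii) and completes the argument. I expect this coefficient-matching — in particular the bookkeeping of signs for $\pi_k$ and the factorization of $\pi_0$ — to be the only genuinely delicate part; the remaining steps are formal.
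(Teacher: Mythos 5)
Your proposal is correct and follows essentially the same route as the paper's proof: the paper likewise reduces the chain-map identity $d^p\circ\phi^p=\phi^{p+1}\circ d^p$ to a direct term-by-term computation and then observes that $\phi^\bullet$ is a section of the inclusion $\check{C}^\bullet(\U,\F)\hookrightarrow\check{C}^\bullet_\sing(\U,\F)$, which induces an isomorphism on cohomology. Your coefficient bookkeeping (the identities $\pi_k=\pi_0\circ\gamma_k$ and $\sign(\pi_0)=(-1)^b\sign(\sigma)$) correctly fills in the ``direct computation'' that the paper leaves to the reader.
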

\begin{proof}
	To show that $\phi^{\bullet}$ is a homomorphism of complexes, by linearity
it suffices to consider images of elements $f\in \ch_\sing^p(\U,\F)$ which are contained in a single summand.
The equality	$d^p(\phi(f))=\phi^{p+1}(d^p(f))$ then follows from a direct computation.

It is straightforward to check that $\phi^p$ is a section to the inclusion of $\ch^p(\U,\F)$ in $\ch_\sing^p(\U,\F)$. Since this inclusion induces an isomorphism on cohomology, it follows that $\phi^p$ does as well.
\end{proof}

\begin{rem}\label{rem:char}
If $\F$ is a sheaf of modules over a field $\KK$ of characteristic $q\neq 0$, one may still define the map $\phi$ as in Lemma \ref{lemma:phi} for those $p$ such that $p<q-1$. It follows that it will still induce an isomorphism of cohomology for $p<q-2$. In particular, since we are always assuming that our base field $\KK$ doesn't have characteristic two or three, we will always obtain isomorphisms in cohomology for $p=0,1,2$.
\end{rem}

In the following, we will be using \v{C}ech cohomology in two situations. The first is when $X$ is an algebraic variety, $\F$ is a coherent sheaf, and $\U$ is a particular open affine cover. In this case $\ch^p(\U,\F)$ is canonically isomorphic to the sheaf cohomology $H^p(X,\F)$ \cite[Theorem 4.5]{hartshorne}, so we will usually just write $H^p(X,\F)$. The second situation is when $X$ is a finite simplicial complex, $\F$ is the constant sheaf with coefficients in $\KK$, and $\U$ is a particular cover by closed simplices, all of whose intersections are contractible. In this case, $\ch^p(\U,\F)$ is canonically isomorphic to the simplicial cohomology groups $H^p(X,\KK)$ \cite[\S II.5.2]{godement}, and we will again usually just write $H^p(X,\KK)$.
\subsection{Cup products}\label{sec:cupdef}
Assume now that $\F$ is a sheaf of algebras on a topological space $X$ with covering $\U$. The multiplication in $\F$ induces a \emph{cup product} in cohomology
\[
	\ch^p(\U,\F)\times\ch^q(\U,\F)\to\ch^{p+q}(\U,\F).
\]
This is described for the \emph{singular} \v{C}ech cohomology groups as follows, see e.g.~\cite[\S7.6 Exercise 6]{bosch}. Given singular $p$- and $q$-cocycles $f=(f_{i_0\ldots i_p})$ and $f'=(f_{i_0\ldots i_q}')$, the cup product of the cohomology classes represented by $f$ and $f'$ is represented by the $p+q$ cocycle
$g=(g_{i_0\ldots i_{p+q}})$ with
\begin{equation}\label{eqn:cup}
	g_{i_0\ldots i_{p+q}}=f_{i_0\ldots i_p}*f'_{i_p\ldots i_{p+q}}
\end{equation}
where $*$ denotes the product on $\F$. This product gives $\bigoplus_p \ch_\sing^p(\U,\F)$ the structure of a graded associative algebra.

For our purposes, we desire a description similar to \eqref{eqn:cup} for the cup product between \emph{alternating} \v{C}ech cohomology groups. This may be obtained by appropriately composing the maps between $\check{C}^\bullet(\U,\F)$ and $\check{C}_\sing^\bullet(\U,\F)$  with the cup product on singular \v{C}ech cohomology.

We will do this explicitly for the case of interest to us, namely, when $\F=\T_X$ is the tangent sheaf on an algebraic variety $X$ with product induced by the Lie bracket $[,]$, and $p=q=1$:
\begin{lemma}\label{lemma:cup}
	Let $f=(f_{ij})$ and $f=(f_{ij}')$ be \v{C}ech one-cycles in $\check{C}^{1}(\U,\T_X)$. Then the image of their cohomology classes under the cup product map
\[
	\ch^1(\U,\T_X)\times\ch^1(\U,\T_X)\to\ch^2(\U,\T_X)
\]
is represented by the two-cycle $g=(g_{ijk})$ with
\[
	g_{ijk}=
	\frac{1}{6}\left(
	[f_{ij},f'_{jk}]
	+[f_{ij},f'_{ik}]
	+[f_{ik},f'_{jk}]
	-[f_{ik},f'_{ij}]
	-[f_{jk},f'_{ik}]
	-[f_{jk},f'_{ij}]
	\right).\\
\]
\end{lemma}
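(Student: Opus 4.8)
The plan is to realize the alternating cup product as the composite of three maps whose effect we already understand: the inclusion $\check{C}^\bullet(\U,\T_X)\hookrightarrow\check{C}^\bullet_\sing(\U,\T_X)$, the singular cup product described by \eqref{eqn:cup}, and the alternating projection $\phi^\bullet$ of Lemma \ref{lemma:phi}. Since every alternating cochain is in particular a singular cochain, the cocycles $f$ and $f'$ include into $\check{C}^1_\sing(\U,\T_X)$, where by \eqref{eqn:cup} their singular cup product is represented by the singular two-cocycle $h=(h_{ijk})$ with $h_{ijk}=[f_{ij},f'_{jk}]$. Because the inclusion $\check{C}^\bullet\hookrightarrow\check{C}^\bullet_\sing$ induces the cohomology isomorphism of Lemma \ref{lemma:phi} (valid for $p\le 2$ in our characteristic by Remark \ref{rem:char}) and $\phi^\bullet$ is a chain map inducing its inverse, the alternating cup product of the classes of $f$ and $f'$ is represented by $\phi^2(h)$. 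Thus it suffices to show that $\phi^2(h)$ equals the claimed cochain $g$.

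The remainder is a direct computation of
\[
\phi^2(h)_{ijk}=\frac{1}{6}\sum_{\pi\in S_3}\sign(\pi)\,[f_{i_{\pi(0)}i_{\pi(1)}},\,f'_{i_{\pi(1)}i_{\pi(2)}}],
\]
obtained by substituting $h$ into the formula of Lemma \ref{lemma:phi} with $(i_0,i_1,i_2)=(i,j,k)$. I would enumerate the six permutations in $S_3$ and, in each resulting term, use the antisymmetry relations $f_{ba}=-f_{ab}$ and $f'_{ba}=-f'_{ab}$ (which hold because $f$ and $f'$ are alternating) to rewrite every bracket in terms of the indices appearing in the target formula. For instance, the transposition swapping the last two positions reindexes to $(i,k,j)$, contributing $\sign(\pi)=-1$ together with $[f_{ik},f'_{kj}]=-[f_{ik},f'_{jk}]$, and hence $+[f_{ik},f'_{jk}]$; running through all six permutations produces exactly the six brackets of the asserted formula, with the stated signs.

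The only point requiring care is the sign bookkeeping in this permutation sum: each of the six terms picks up a sign from $\sign(\pi)$ and possibly further signs from applying antisymmetry of $f$ and $f'$, and one must check that these combine correctly. This is the main, though entirely mechanical, obstacle. Once the six terms are tabulated, collecting them and factoring out $\tfrac{1}{6}$ yields precisely $g_{ijk}$, completing the proof.
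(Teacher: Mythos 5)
Your proposal is correct and follows exactly the paper's own argument: include $f,f'$ into the singular \v{C}ech complex, apply the singular cup product formula \eqref{eqn:cup} to get $\tilde g_{ijk}=[f_{ij},f'_{jk}]$, and then apply $\phi^2$ from Lemma \ref{lemma:phi}, with the six-term permutation sum and antisymmetry of $f,f'$ producing the stated expression. The sign bookkeeping you outline checks out, so there is nothing to add.
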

\begin{proof}
To compute the cup product, we first include $f$ and $f'$ in the group of singular \v{C}ech cochains $\check{C}^1_\sing(\U,\T_X)$, and then apply \eqref{eqn:cup} to find a representative $\tilde{g}$ of the cup product as a singular two-cycle.
We obtain
\[
	\tilde{g}_{ijk}=[f_{ij},f'_{jk}].
\]
The claim now follows from Lemma \ref{lemma:phi} and a straightforward computation by setting $g=\phi^2(\tilde{g})$.
\end{proof}
\begin{rem}
	Choosing a section to the inclusion of $\check{C}^\bullet(\U,\T_X)\hookrightarrow \check{C}_\sing^\bullet(\U,\T_X)$ that is different from our preferred section $\phi$ would lead to a representation of the cup product on the cocycle level that is different from that of Lemma \ref{lemma:cup}. Our choice of section $\phi$ was motivated by the symmetry of the expression for $g_{ijk}$ in this lemma.
\end{rem}
\subsection{Toric Varieties}\label{sec:toric}
We now fix notation and review some basic facts from toric geometry. See \cite{fulton} or \cite{cls} for a more thorough introduction. Throughout the paper we will fix a lattice $M$ which is the character lattice of the algebraic torus $T=\spec \KK[M]$. The lattice $N=\Hom(M,\ZZ)$ is the lattice of one-parameter subgroups of $T$.
 We denote the $\QQ$-vector spaces associated to $M,N$ by $M_\QQ$ and $N_\QQ$.

 Given a fan $\Sigma$ in $N_\QQ$, we associate a toric variety $X_\Sigma$, see \cite[\S3.1]{cls}. The variety $X_\Sigma$ is covered by open affine varieties $U_\sigma$ as $\sigma$ ranges over maximal cones in the fan $\Sigma$, where 
 \[
	 U_\sigma=\spec \KK[M\cap \sigma^\vee];\qquad \sigma^\vee=\{u\in M_{\QQ}\ |\ v(u)\geq 0\ \forall v\in\sigma\}.
 \]
 We denote the regular function on $U_\sigma$ associated to $u\in M\cap\sigma^\vee$ by $\chi^u$.

 Important geometric properties of $X_\Sigma$ can be translated into properties on $\Sigma$. For example, the variety $X_\Sigma$ is complete if and only if the fan $\Sigma$ is complete, that is, the union of all cones in $\Sigma$ is all of $N_\QQ$ \cite[Theorem 3.4.6]{cls}.
Likewise, the variety $X_\Sigma$ is smooth if and only if $\Sigma$ is smooth, that is, each maximal $\sigma\in \Sigma$ has rays whose primitive generators are a subset of a lattice basis of $N$ \cite[Theorem 3.1.19]{cls}.
Slightly more generally, the variety $X$ is $\QQ$-factorial if and only if $\Sigma$ is a simplicial fan, that is, each maximal $\sigma\in \Sigma$ has rays whose primitive generators are linearly independent \cite[Proposition 4.2.7]{cls}.
 We will henceforth always assume that $\Sigma$ is complete and simplicial. In other words, we will assume that $X$ is $\QQ$-factorial and complete.

 We denote the rays of $\Sigma$ by $\Sigma^{(1)}$; to any ray $\rho\in\Sigma^{(1)}$ and $u\in M$ we denote by $\rho(u)$ the evaluation of the primitive lattice generator of $\rho$ at $u$. Prime torus invariant divisors of $X_\Sigma$ are in bijection with rays in $\Sigma^{(1)}$ \cite[\S4.1]{cls}. We denote the divisor corresponding to $\rho$ by $D_\rho$.
Any torus invariant divisor $D$ may be written uniquely as a sum
\[
D=	\sum_{\rho\in\Sigma^{(1)}} a_\rho D_\rho.
\]
The sheaf $\CO(D)$ has the following local description:
the function $\chi^u$ is in $H^0(U_\sigma,\CO(D))$
if and only if 
\[
\rho(u)+a_\rho\geq 0
\]
for all $\rho\in\sigma\cap\Sigma^{(1)}$.
In particular, fixing a ray $\rho$, $\chi^u\in H^0(U_\sigma,\CO(D_\rho))$ if and only if for all $\epsilon\in\sigma\cap\Sigma^{(1)}$,
\begin{equation}\label{eqn:div}
	\epsilon(u)\geq\begin{cases}
		0 & \epsilon\neq \rho,\\
		-1 & \epsilon=\rho.
	\end{cases}
\end{equation}
\subsection{The Euler Sequence}
The fundamental tool for understanding the tangent bundle on a smooth toric variety is the \emph{Euler sequence}. For $X=X_\Sigma$ complete and $\QQ$-factorial, there is an exact sequence of sheaves
\[
\xymatrix{
	0\ar[r] & N^1\otimes \CO_X \ar[r] &\bigoplus_{\rho\in\Sigma^{(1)}} \CO_X(D_\rho) \ar_{\quad\qquad\eta}[r]& \T_X \ar[r] & 0
}
\]
where $N^1$ is a finite dimensional vector space, see \cite[Theorem 8.1.6]{cls} (and dualize). This generalizes the standard Euler sequence on projective space.
We will need an explicit description of the map $\eta$. 
Following through the construction in loc.~cit.~and dualizing, one obtains that 
\begin{equation}\label{eqn:euler}
	\eta(\chi^u)=\partial(\rho,u)
\end{equation}
for $\chi^u$ a local section of $\CO(D_\rho)$, where the derivation $\partial(\rho,u)$ is defined via
\[
	\partial(\rho,u)(\chi^v)=\rho(v)\chi^{u+v}
\]
for any $v\in M$.

We will be interested in the cohomology groups of $\T_X$. The following was first observed by Jaczewski in the smooth case:
\begin{lemma}\cite{jac}\label{lemma:tcohom}
	For $p\geq 1$, the map $\eta$ induces isomorphisms
	\[
		\bigoplus_{\rho\in\Sigma^{(1)}}H^p(X,\CO_X(D_\rho))\to H^p(X,\T_X).
	\]
\end{lemma}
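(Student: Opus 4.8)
The plan is to read the isomorphism directly off the long exact cohomology sequence of the Euler sequence; the only external input is the vanishing of the higher cohomology of the structure sheaf on a complete toric variety. First I would feed the short exact Euler sequence into the long exact sequence in sheaf cohomology, obtaining for each $p$ the segment
\[
H^p(X,N^1\otimes\CO_X) \to \bigoplus_{\rho\in\Sigma^{(1)}} H^p(X,\CO_X(D_\rho)) \xrightarrow{\eta} H^p(X,\T_X) \to H^{p+1}(X,N^1\otimes\CO_X).
\]
Since $N^1$ is a finite-dimensional $\KK$-vector space, the sheaf $N^1\otimes\CO_X$ is a finite direct sum of copies of $\CO_X$, so $H^p(X,N^1\otimes\CO_X)\cong N^1\otimes_\KK H^p(X,\CO_X)$ for every $p$.

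The crucial step is then to invoke the classical vanishing $H^p(X,\CO_X)=0$ for all $p\geq 1$, which holds because $X$ is complete \cite{cls}; combined with the previous identification this gives $H^p(X,N^1\otimes\CO_X)=0$ for all $p\geq 1$. Consequently, for any fixed $p\geq 1$ both flanking terms $H^p(X,N^1\otimes\CO_X)$ and $H^{p+1}(X,N^1\otimes\CO_X)$ vanish, and exactness collapses the displayed segment into the asserted isomorphism $\bigoplus_\rho H^p(X,\CO_X(D_\rho))\xrightarrow{\sim}H^p(X,\T_X)$ induced by $\eta$.

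I do not expect any genuinely hard step here: essentially all the content is packaged into the cohomology vanishing for $\CO_X$, which is a standard fact to be cited rather than reproved, while the remainder is the formal bookkeeping of the long exact sequence. The only points meriting care are confirming that $N^1\otimes\CO_X$ is genuinely a trivial bundle---so that its cohomology is controlled entirely by that of $\CO_X$---and checking that the vanishing one cites is valid in the $\QQ$-factorial rather than merely smooth setting; both hold for every complete toric variety.
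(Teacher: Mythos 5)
Your argument is exactly the paper's: apply the long exact cohomology sequence to the Euler sequence, identify $N^1\otimes\CO_X$ with a finite sum of copies of $\CO_X$, and use the vanishing $H^p(X,\CO_X)=0$ for $p\geq 1$ on a complete toric variety (the paper cites \cite[Theorem 9.2.3]{cls}, which indeed covers the $\QQ$-factorial case). The proposal is correct and matches the paper's proof.
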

\begin{proof}
	This follows directly from the Euler sequence, the long exact sequence of cohomology, and the vanishing of $H^p(X,\CO_X)$ for $p\geq 1$, see \cite[Theorem 9.2.3]{cls}.
\end{proof}
\subsection{Cohomology of Divisorial Sheaves on Toric Varieties}\label{sec:cohom}
In order to understand the cohomology of $\T_X$, Lemma \ref{lemma:tcohom} implies that it will be useful to have a combinatorial description of the cohomology groups of the sheaves $\CO(D_\rho)$.
Since $T$ acts on $X=X_\Sigma$, it will also act on the spaces of sections of $\T_X$ and $\CO(D)$ for any torus invariant divisor $D$. This induces an $M$-grading on the respective cohomology groups. 
We will follow \cite[\S9.1]{cls} to describe the graded pieces of these cohomology groups. We go into what might seem more detail than necessary since we will later need explicit descriptions of the maps between various isomorphic cohomology groups.

Let $D=\sum a_\rho D_\rho$ be any torus invariant divisor. Fixing some $u\in M$, we define the simplicial complex
\[
	V_{D,u}=\bigcup_{\sigma\in \Sigma} \conv\{n_\rho\ |\ \rho\in \sigma\cap\Sigma^{(1)}\ \textrm{and}\ \rho(u)+a_\rho<0\}\subset N_\QQ
\]
where $n_\rho$ is the primitive generator of any ray $\rho$.
For each $\sigma\in\Sigma$, there is a natural exact sequence 
\[
	0\to H^0(U_\sigma,\CO_X(D))_u\to \KK \to H^0(V_{D,u}\cap\sigma,\KK)\to 0,
\]
see \cite[Equation 9.1.10]{cls}.
Here, 
	$H^0(U_\sigma,\CO_X(D))_u$
denote the degree $u$ piece of $H^0(U_\sigma,\CO_X(D))$.
	Let $I$ be the set of maximal cones in $\Sigma$; we consider the open cover $\U=\{U_\sigma\}_{\sigma\in I}$ of $X_\Sigma$. Likewise, we have a closed cover $\V=\{V_\sigma\}_{\sigma \in I}$ of $V_{D,u}$, where $V_\sigma=V_{D,u}\cap \sigma$. The above exact sequence thus leads to an exact sequence
\begin{equation}\label{eqn:complex}
	0\to \check{C}^p(\U,\CO(D))_u\to\check{C}^p(\W,\KK)\to \check{C}^p(\V,\KK)\to 0
\end{equation}
where $\W=\{W_\sigma\}_{\sigma \in I}$ is the trivial closed cover of a single point $x$ with each $W_\sigma=x$. This sequence is compatible with the \v{C}ech differentials, so we obtain an exact sequence of \v{C}ech complexes.
Since $H^0(x,\KK)=\KK$ and $H^p(x,\KK)=0$ for $p>0$, the long exact sequence of cohomology implies that the connecting homomorphisms
\begin{equation}\label{eqn:scohom}
	\ch^{p-1}(\V,\KK)\to\ch^{p}(\U,\CO(D))_u
\end{equation}
are isomorphisms if $p\geq 2$, and for $p=1$ we have the exact sequence
\[
	0\to H^0(X,\CO(D))_u\to \KK\to H^0(V_{D,u},\KK)\to H^1(X,\CO(D))_u\to 0.
\]
This final exact sequence induces an isomorphism between the reduced cohomology $\widetilde{H}^0(V_{D,u},\KK)$ and $H^1(X,\CO(D))_u$, see \cite[Theorem 9.1.3]{cls}.

\section{Tangent and Obstruction Spaces}\label{sec:obs}
As before, we are considering a  complete $\QQ$-factorial toric variety $X=X_{\Sigma}$. 
For $\rho\in \Sigma^{(1)}$ and $u\in M$, we define
\[
	V_{\rho,u}:=V_{D_\rho,u}
\]
and notice that the vertices of $V_{\rho,u}$ have the following concrete description: for $\epsilon\in\Sigma^{(1)}$, $n_\epsilon\in V_{\rho,u}$ if and only if
\begin{enumerate}
	\item $\epsilon=\rho$ and $\epsilon(u)<-1$; or
	\item $\epsilon\neq \rho$ and $\epsilon(u)<0$.
\end{enumerate}
We define $\Gamma_{\rho,u}$ and $K_{\rho,u}$ to respectively be the one- and two-skeleta of $V_{\rho,u}$. More generally, let $V_{\rho,u}^{(p)}$ denote the $p$-skeleton of $V_{\rho,u}$.
Below we will come to see that we only need to consider the special case when $\rho(u)=-1$, in which case the description of the vertices of $V_{\rho,u}$ simplifies and $n_\rho$ itself is never a vertex of $V_{\rho,u}$.

We briefly comment on the decomposition
\begin{equation*}
H^1(X,\T_X)=\bigoplus_{\rho\in\Sigma^{(1)}} \bigoplus_{\substack{u\in M\\ \rho(u) =-1}} \widetilde{H}^0(\Gamma_{\rho,u},\KK).
\end{equation*}
This was shown in \cite{ilten1} in the smooth case; it was noted in \cite{mavlyutov2} that this also holds in the $\QQ$-factorial case. The decomposition arises by combining Lemma \ref{lemma:tcohom}
with the isomorphism 
between $\widetilde{H}^0(V_{D_\rho,u},\KK)$ and $H^1(X,\CO(D_\rho))$ described in \S\ref{sec:cohom}. One then notes that
$\widetilde{H}^0(V_{D_\rho,u},\KK)=\widetilde{H}^0(\Gamma_{\rho,u},\KK)$, and this is non-zero only if $\rho(u)=-1$.

A similar argument to the one above yields a description of $H^p(X,\T_X)$ for all $p\geq 1$:
\begin{prop}\label{prop:obs2}
For $p\geq 1$, the space $H^p(X,\T_X)$ may be decomposed as
	\begin{equation*}
H^p(X,\T_X)=\bigoplus_{\substack{(\rho,u)\in \Sigma^{(1)}\times M\\ \rho(u) =-1}} \widetilde{H}^{p-1}(V_{\rho,u},\KK)
=\bigoplus_{\substack{(\rho,u)\in\Sigma^{(1)}\times M\\ \rho(u) =-1}} \widetilde{H}^{p-1}(V_{\rho,u}^{(p)},\KK).
\end{equation*}
In particular, the space $H^2(X,\T_X)$ may be decomposed as
	\begin{equation*}
H^2(X,\T_X)=\bigoplus_{\rho\in\Sigma^{(1)}} \bigoplus_{\substack{u\in M\\ \rho(u) =-1}} H^1(K_{\rho,u},\KK).
\end{equation*}
\end{prop}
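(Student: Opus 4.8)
The plan is to assemble the isomorphisms established in the preliminary sections and then determine exactly which graded pieces survive. First I would apply Lemma~\ref{lemma:tcohom} to reduce the computation of $H^p(X,\T_X)$ to the divisorial cohomology $\bigoplus_{\rho\in\Sigma^{(1)}} H^p(X,\CO_X(D_\rho))$. Since the torus acts on each $\CO_X(D_\rho)$, I would split every summand into its $M$-graded pieces, so the task becomes understanding $H^p(X,\CO_X(D_\rho))_u$ for each $u\in M$.

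Next I would invoke the isomorphisms of \S\ref{sec:cohom} relating divisorial cohomology to the complex $V_{\rho,u}=V_{D_\rho,u}$. For $p\geq 2$ the connecting homomorphism \eqref{eqn:scohom} gives $H^p(X,\CO_X(D_\rho))_u\cong H^{p-1}(V_{\rho,u},\KK)$, and since $p-1\geq 1$ this unreduced group coincides with the reduced group $\widetilde H^{p-1}(V_{\rho,u},\KK)$; for $p=1$ the final exact sequence of \S\ref{sec:cohom} gives $H^1(X,\CO_X(D_\rho))_u\cong\widetilde H^0(V_{\rho,u},\KK)$ directly. Thus for all $p\geq 1$ I obtain $H^p(X,\CO_X(D_\rho))_u\cong\widetilde H^{p-1}(V_{\rho,u},\KK)$, which yields the first asserted decomposition apart from the restriction $\rho(u)=-1$.

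The crux, and the only genuinely non-formal step, is to show that $\widetilde H^{p-1}(V_{\rho,u},\KK)$ vanishes whenever $\rho(u)\neq -1$. Here I would compare $V_{\rho,u}$ with the complex $V_{0,u}$ attached to the trivial divisor $D=0$. Reading off the concrete vertex descriptions, the only vertex whose membership can differ between the two complexes is $n_\rho$: it lies in $V_{0,u}$ exactly when $\rho(u)<0$, and in $V_{\rho,u}$ exactly when $\rho(u)<-1$. These conditions agree precisely when $\rho(u)\neq -1$, and in that case the two complexes have identical faces in every cone, so $V_{\rho,u}=V_{0,u}$. Since $H^q(X,\CO_X)=0$ for all $q\geq 1$, running the same isomorphisms for $D=0$ forces $\widetilde H^{q}(V_{0,u},\KK)=0$ for all $u$ and all $q\geq 0$. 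Hence $\widetilde H^{p-1}(V_{\rho,u},\KK)=0$ unless $\rho(u)=-1$, pinning down the index set of the direct sum.

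Finally, to obtain the skeletal reformulations I would use the standard fact that the degree-$(p-1)$ reduced cohomology of a simplicial complex depends only on its $p$-skeleton, since the cochain groups computing it involve only simplices of dimension at most $p$. This identifies $\widetilde H^{p-1}(V_{\rho,u},\KK)$ with $\widetilde H^{p-1}(V_{\rho,u}^{(p)},\KK)$. Specializing to $p=2$ and using that $K_{\rho,u}=V_{\rho,u}^{(2)}$ and that reduced and unreduced cohomology agree in positive degree gives $\widetilde H^1(V_{\rho,u},\KK)=H^1(K_{\rho,u},\KK)$, which is the stated description of $H^2(X,\T_X)$. The main obstacle is the comparison step of the previous paragraph; everything else is bookkeeping with isomorphisms already in hand.
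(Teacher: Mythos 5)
Your proof is correct, and its overall skeleton (Euler sequence via Lemma~\ref{lemma:tcohom}, the $M$-grading, the identifications of \S\ref{sec:cohom} with $\widetilde H^{p-1}(V_{\rho,u},\KK)$, and passage to the $p$-skeleton) matches the paper's. The one step where you genuinely diverge is the crux you identified: showing $\widetilde H^{p-1}(V_{\rho,u},\KK)=0$ unless $\rho(u)=-1$. The paper argues by scaling: if $\rho(u)\neq -1$ then $V_{\rho,\lambda u}=V_{\rho,u}$ for all $\lambda\in\ZZ_{>0}$, so a nonzero cohomology group would recur in infinitely many degrees and force $H^p(X,\CO(D_\rho))$ to be infinite-dimensional, contradicting completeness of $X$. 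You instead observe that when $\rho(u)\neq-1$ the complex $V_{\rho,u}$ coincides with $V_{0,u}$ (only the vertex $n_\rho$ could differ, and its membership conditions $\rho(u)\leq -2$ versus $\rho(u)\leq -1$ agree exactly off the locus $\rho(u)=-1$), and then import the vanishing $\widetilde H^q(V_{0,u},\KK)=0$ from $H^{q+1}(X,\CO_X)=0$. Both arguments are valid; yours is arguably tighter in that it reuses Demazure vanishing, which the paper already needs for Lemma~\ref{lemma:tcohom}, and it works degree by degree rather than invoking finite-dimensionality across infinitely many degrees (note the paper's version also silently needs the easy case $u=0$, where $V_{\rho,0}=\emptyset$, handled separately or absorbed). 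The paper's version is shorter and avoids introducing the auxiliary complex $V_{0,u}$.
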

\begin{proof}
	By Lemma \ref{lemma:tcohom}, we have an $M$-graded isomorphism
	\[
		\bigoplus_{\rho\in\Sigma^{(1)}} H^p(X,\CO_X(D_\rho)) \to H^p(X,\T_X).
	\]
	Coupled with equation \eqref{eqn:scohom}, we obtain
	\[
		H^p(X,\T_X)_u\cong \bigoplus_{\rho\in\Sigma^{(1)}} \widetilde{H}^{p-1}(V_{\rho,u},\KK).
	\]

	We now show that $\widetilde{H}^{p-1}(V_{\rho,u},\KK)=0$ unless $\rho(u)=-1$. From the explicit description of $V_{\rho,u}$ above,
we observe that if $\rho(u)\neq -1$, then $V_{\rho,\lambda\cdot u}$ is the same for any $\lambda\in \ZZ_{>0}$. 
In particular, if $\widetilde{H}^{p-1}(V_{\rho,u},\KK)\neq 0$, $H^p(X,\CO(D_\rho))$ would be an infinite dimensional $\KK$-vector space, which is impossible since $X$ is complete.
We conclude that we must only consider those pairs $(\rho,u)$ such that $\rho(u)=-1$.

Finally, the $(p-1)$st reduced cohomology of $V_{\rho,u}$ is the same as that of its $p$-skeleton $V_{\rho,u}^{(p)}$.
\end{proof}

We will be interested in special zero-cocycles $f=(f_\sigma)$
representing elements of $\widetilde{H}^0(\Gamma_{\rho,u},\KK)$ coming from a connected component $Z$ of $\Gamma_{\rho,u}$. For such a connected component $Z$, we define $f(Z)=(f(Z)_{\sigma})$ by
\begin{equation}\label{eqn:comp}
	f(Z)_\sigma=\begin{cases}
1 & \sigma\cap Z\neq \emptyset\\
0 & \sigma\cap Z= \emptyset
	\end{cases}.
\end{equation}
These will be useful cocycles for us, since the classes of $\{f(Z)\}$ form a basis for $H^0(\Gamma_{\rho,u},\KK)$ as $Z$ ranges over all connected components of $\Gamma_{\rho,u}$. In particular, they provide a spanning set for
$\widetilde{H}^0(\Gamma_{\rho,u},\KK)$.
If we are instead considering a connected component $Z'$ of $\Gamma_{\rho',u'}$, we will use the notation ${f'(Z')}$.

\section{Combinatorial Description of Cup Product}\label{sec:cup}
\subsection{Mapping to $H^2(X,\T_X)$}
Fix $\rho,\rho'\in\Sigma^{(1)}$ and $u,u'\in M$ satisfying $\rho(u)=\rho'(u')=-1$.
We now describe the map
\[
	\widetilde{H}^0(\Gamma_{\rho,u},\KK)\times\widetilde{H}^0(\Gamma_{\rho',u'},\KK)\to H^2(X,\T_X)_{u+u'}
\]
induced by the cup product in terms of \v{C}ech cocycles:
\begin{lemma}\label{lem:maptoh2}
	Let $f=(f_\sigma),f'=(f_\sigma')$ be \v{C}ech zero-cocycles of $\Gamma_{\rho,u}$ and $\Gamma_{\rho',u'}$. The image in $H^2(X,\T_X)$ of the corresponding reduced cohomology classes under the cup product
	is represented by the \v{C}ech two-cycle $\theta=(\theta_{\sigma\tau\gamma})$, where
\begin{align*}
	\theta_{\sigma\tau\gamma}	=\frac{1}{2}\Big(
	f_{\sigma}f_{\tau}'-f_{\tau}f_{\sigma}'
	+f_{\gamma}f_{\sigma}'-f_{\sigma}f_{\gamma}'
	+f_{\tau}f_{\gamma}'-f_{\gamma}f_{\tau}'
	\Big)\\
	\cdot 	\Big(\rho(u')\partial(\rho',u+u')-\rho'(u)\partial(\rho,u+u')\Big).
\end{align*}
\end{lemma}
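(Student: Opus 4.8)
The plan is to compute the cup product directly using the explicit formula from Lemma \ref{lemma:cup}, after transporting the zero-cocycles $f$ and $f'$ from the reduced cohomology of the graphs $\Gamma_{\rho,u}$ and $\Gamma_{\rho',u'}$ into $H^1(X,\T_X)$ via the chain of isomorphisms set up in \S\ref{sec:cohom}. Concretely, I would first recall that a zero-cocycle $f=(f_\sigma)$ representing a class in $\widetilde H^0(\Gamma_{\rho,u},\KK)=\widetilde H^0(V_{\rho,u},\KK)$ corresponds, under the connecting homomorphism of the short exact sequence \eqref{eqn:complex} (equivalently \eqref{eqn:scohom} for $p=1$), to a \v{C}ech one-cocycle in $\check C^1(\U,\CO(D_\rho))_u$. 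The standard computation of the connecting map shows this one-cocycle is $(\,(f_\tau-f_\sigma)\chi^u\,)_{\sigma\tau}$, where I interpret $\chi^u$ as the relevant local section of $\CO(D_\rho)$ on $U_\sigma\cap U_\tau$ guaranteed by \eqref{eqn:div} (since $\rho(u)=-1$). Applying the Euler map $\eta$ of \eqref{eqn:euler}, this lands in $\check C^1(\U,\T_X)_u$ as the one-cocycle with entries $(f_\tau-f_\sigma)\,\partial(\rho,u)$; similarly $f'$ maps to the one-cocycle with entries $(f_\tau'-f_\sigma')\,\partial(\rho',u')$.

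The second step is to feed these two one-cocycles into the symmetrized bracket formula of Lemma \ref{lemma:cup}. Writing $a_{\sigma\tau}=(f_\tau-f_\sigma)$ and $b_{\sigma\tau}=(f_\tau'-f_\sigma')$, so that the inputs are $a_{\sigma\tau}\partial(\rho,u)$ and $b_{\sigma\tau}\partial(\rho',u')$, I would substitute into
\[
g_{\sigma\tau\gamma}=\tfrac{1}{6}\bigl([f_{\sigma\tau},f'_{\tau\gamma}]+[f_{\sigma\tau},f'_{\sigma\gamma}]+[f_{\sigma\gamma},f'_{\tau\gamma}]-[f_{\sigma\gamma},f'_{\sigma\tau}]-[f_{\tau\gamma},f'_{\sigma\gamma}]-[f_{\tau\gamma},f'_{\sigma\tau}]\bigr).
\]
The key input here is the Lie bracket of the two basic derivations. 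From the defining rule $\partial(\rho,u)(\chi^v)=\rho(v)\chi^{u+v}$, a direct computation gives the commutator
\[
[\partial(\rho,u),\partial(\rho',u')]=\rho(u')\,\partial(\rho',u+u')-\rho'(u)\,\partial(\rho,u+u'),
\]
which is exactly the derivation appearing in the second factor of the claimed expression for $\theta_{\sigma\tau\gamma}$. Since each bracket in $g_{\sigma\tau\gamma}$ is a scalar ($a_{\bullet}b_{\bullet}$) times this fixed commutator, the derivation factors out, and what remains is a scalar combination of products of the $a$'s and $b$'s.

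The third step is the bookkeeping that the remaining scalar equals $\tfrac{1}{2}(f_\sigma f_\tau'-f_\tau f_\sigma'+\cdots)$. I would expand each $a_{\sigma\tau}b_{\tau\gamma}=(f_\tau-f_\sigma)(f_\gamma'-f_\tau')$ and so on, collect the six terms with their signs, and verify that the symmetric products (such as $f_\sigma f_\sigma'$, $f_\tau f_\tau'$, and the diagonal cross terms) cancel while the antisymmetric products $f_\sigma f_\tau'-f_\tau f_\sigma'$ survive, with the $\tfrac{1}{6}$ prefactor combining with the multiplicities to yield $\tfrac{1}{2}$. I expect this scalar identity to be the main obstacle: it is elementary but the sign- and index-tracking across six brackets is where an error is easy to make, so I would organize it by exploiting the total antisymmetry of the target (alternating) two-cochain, which lets me check the coefficient of a single representative monomial rather than all of them. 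Finally, I would note that the output is manifestly an alternating two-cochain in $\check C^2(\U,\T_X)$ whose class lies in the $(u+u')$-graded piece (as both bracket summands carry degree $u+u'$), and that by Proposition \ref{prop:obs2} this graded piece is $H^1(K_{\rho,u+u'},\KK)$ exactly when $\rho(u+u')=-1$; this is what anchors the result for the subsequent refinement in Theorem \ref{thm:main}.
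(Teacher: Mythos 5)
Your proposal is correct and follows essentially the same route as the paper's proof: transport $f,f'$ through the connecting homomorphism of \eqref{eqn:complex} and the Euler map to get the one-cocycles $(f_\tau-f_\sigma)\partial(\rho,u)$ and $(f_\tau'-f_\sigma')\partial(\rho',u')$, compute the commutator $[\partial(\rho,u),\partial(\rho',u')]=\rho(u')\partial(\rho',u+u')-\rho'(u)\partial(\rho,u+u')$, and substitute into the symmetrized formula of Lemma \ref{lemma:cup}, simplifying the resulting scalar to the antisymmetric expression with prefactor $\tfrac{1}{2}$. The final scalar bookkeeping you flag as the main obstacle is exactly the ``direct computation'' the paper performs, so nothing further is needed.
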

\begin{proof}
	We just need to trace through the inclusions of $\widetilde{H}^0(\Gamma_{\rho,u},\KK)$ and $\widetilde{H}^0(\Gamma_{\rho',u'},\KK)$ in $H^1(X,\T_X)$
	and compose with the description of the cup product found in Lemma \ref{lemma:cup}.
First, mapping $f$ to a cohomology class in $H^1(X,\CO(D_\rho))_u$, 
we must use the first connecting homomorphism of \eqref{eqn:complex}. We do this by sending $f$ to $a=(a_\sigma)\in \check{C}^0(\W,\KK)$ with $a_\sigma=f_\sigma$ and applying the differential $d^0$ to obtain $b=d^0(a)\in\check{C}^1(\W,\KK)$ with
\[
	b_{\sigma\tau}=a_\tau-a_\sigma=f_\tau-f_\sigma.
\]
By construction, this is the image of the element $c\in \check{C}^1(\U,\CO(D_\rho))_u$ where
\[
	c_{\sigma\tau}=(f_\tau-f_\sigma)\cdot\chi^u.
\]
Mapping further to $H^1(X,\T_X)_u$ using Lemma \ref{lemma:tcohom}, we obtain the cocycle $g=(g_{\sigma\tau})$, where
\[
	g_{\sigma\tau}=(f_\tau-f_\sigma)\partial(\rho,u).
\]
A similar computation holds for $f'$; we denote the corresponding one-cocycle in $\check{C}^1(\U,\T_X)_{u'}$ by $g'$.

Before applying Lemma \ref{lemma:cup}, we note the straightforward calculation
\begin{align*}
	[\partial(\rho,u),\partial(\rho',u')]=
	\partial(\rho,u)\circ\partial(\rho',u')-
	\partial(\rho',u')\circ\partial(\rho,u)\\
	=\rho(u')\partial(\rho',u+u')-\rho'(u)\partial(\rho,u+u').
\end{align*}
Taking this into account while applying the lemma to $g$ and $g'$, we obtain 
the two-cocycle $\theta$ with
\begin{align*}
	\theta_{\sigma\tau\gamma}=
	\frac{1}{6}\Big(
(f_\tau-f_\sigma)(f'_\gamma-f'_\tau)
+(f_\tau-f_\sigma)(f'_\gamma-f'_\sigma)
+(f_\gamma-f_\sigma)(f'_\gamma-f'_\tau)\\
-(f_\gamma-f_\sigma)(f'_\tau-f'_\sigma)
-(f_\gamma-f_\tau)(f'_\gamma-f'_\sigma)
-(f_\gamma-f_\tau)(f'_\tau-f'_\sigma)
	\Big)\\
	\cdot 	\Big(\rho(u')\partial(\rho',u+u')-\rho'(u)\partial(\rho,u+u')\Big).
\end{align*}
This simplifies to the expression in the claim.
\end{proof}
\subsection{Lifting to $H^2(X,\CO(D_\rho))$}
We now show how to lift the cocycle $\theta$ of Lemma \ref{lem:maptoh2} to a cocycle representing an element of 
\[H^2(X,\CO(D_\rho))_{u+u'}\oplus H^2(X,\CO(D_{\rho'}))_{u+u'}.\]
\begin{lemma}\label{lem:maptodh2}
	Assume that $\rho\neq \rho'$.
	With $f,f',\theta$ as in Lemma \ref{lem:maptoh2}, 
define 
	\begin{align*}
		&\kappa=(\kappa_{\sigma\tau\gamma})\qquad \kappa'=(\kappa_{\sigma\tau\gamma}');\\
	&\kappa_{\sigma\tau\gamma}	=\frac{\rho'(u)}{2}\Big(
	f_{\sigma}f_{\tau}'-f_{\tau}f_{\sigma}'
	+f_{\gamma}f_{\sigma}'-f_{\sigma}f_{\gamma}'
	+f_{\tau}f_{\gamma}'-f_{\gamma}f_{\tau}'
	\Big)
	\cdot \chi^{u+u'};\\
	&\kappa_{\sigma\tau\gamma}'	=\frac{\rho(u')}{2}\Big(
	f_{\sigma}f_{\tau}'-f_{\tau}f_{\sigma}'
	+f_{\gamma}f_{\sigma}'-f_{\sigma}f_{\gamma}'
	+f_{\tau}f_{\gamma}'-f_{\gamma}f_{\tau}'
	\Big)
	\cdot \chi^{u+u'}.\\
	\end{align*}
	Then $\kappa$ and $\kappa'$ are two-cocyles in 
	$\check{C}^2(\U,\CO(D_\rho)$ and 
	$\check{C}^2(\U,\CO(D_{\rho'})$, and the image of $\kappa'-\kappa$ under the map to $\check{C}^2(\U,\T_X)$ induced by $\eta$ is exactly $\theta$. 
\end{lemma}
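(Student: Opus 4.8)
My plan is to write $A_{\sigma\tau\gamma}$ for the bracketed scalar $f_\sigma f'_\tau-f_\tau f'_\sigma+f_\gamma f'_\sigma-f_\sigma f'_\gamma+f_\tau f'_\gamma-f_\gamma f'_\tau$, so that $\kappa_{\sigma\tau\gamma}=\tfrac{\rho'(u)}{2}A_{\sigma\tau\gamma}\,\chi^{u+u'}$ and $\kappa'_{\sigma\tau\gamma}=\tfrac{\rho(u')}{2}A_{\sigma\tau\gamma}\,\chi^{u+u'}$. First I would record the identity $A_{\sigma\tau\gamma}=\det\big(\begin{smallmatrix}f_\sigma&f'_\sigma&1\\ f_\tau&f'_\tau&1\\ f_\gamma&f'_\gamma&1\end{smallmatrix}\big)$, so that $A_{\sigma\tau\gamma}$ vanishes exactly when the three points $(f_\sigma,f'_\sigma),(f_\tau,f'_\tau),(f_\gamma,f'_\gamma)$ are collinear in $\KK^2$; in particular it vanishes whenever $f_\sigma=f_\tau=f_\gamma$ or $f'_\sigma=f'_\tau=f'_\gamma$. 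The statement then breaks into three parts: that $\kappa,\kappa'$ are well-defined cochains (the monomial $\chi^{u+u'}$ actually lies in the claimed sheaf on each relevant triple intersection), that they are cocycles, and that $\eta(\kappa'-\kappa)=\theta$. The last part is immediate, since $\eta$ acts termwise and, by \eqref{eqn:euler}, sends $\chi^{u+u'}$ to $\partial(\rho,u+u')$ as a section of $\CO(D_\rho)$ and to $\partial(\rho',u+u')$ as a section of $\CO(D_{\rho'})$; as $A_{\sigma\tau\gamma}$ is a scalar this reproduces precisely the formula for $\theta_{\sigma\tau\gamma}$ of Lemma \ref{lem:maptoh2}.

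For the cocycle property I would observe that $A_{\sigma\tau\gamma}=b_{\sigma\tau}+b_{\tau\gamma}+b_{\gamma\sigma}$, where $b_{\sigma\tau}=f_\sigma f'_\tau-f_\tau f'_\sigma$ is an alternating scalar one-cochain; thus the scalar two-cochain $(A_{\sigma\tau\gamma})$ equals $d^1(b)$ and in particular $d^2(A_{\sigma\tau\gamma})=0$. Since each $\kappa_{\sigma\tau\gamma}$ is the fixed monomial $\chi^{u+u'}$ scaled by $\tfrac{\rho'(u)}{2}A_{\sigma\tau\gamma}$, and $\chi^{u+u'}$ restricts compatibly across intersections, the coboundary $d^2(\kappa)$ is $\tfrac{\rho'(u)}{2}d^2(A_{\sigma\tau\gamma})\,\chi^{u+u'}=0$ as soon as every term in sight is a genuine section; the same reasoning dispatches $\kappa'$. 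This reduces everything to the well-definedness of the cochains, which is the main obstacle.

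To establish validity I would show that $\chi^{u+u'}$ is a section of $\CO(D_\rho)$ over $U_{\sigma\cap\tau\cap\gamma}$ whenever $\kappa_{\sigma\tau\gamma}\neq 0$; by \eqref{eqn:div} this means checking $\epsilon(u+u')\geq 0$ for each ray $\epsilon\in\sigma\cap\tau\cap\gamma$ with $\epsilon\neq\rho$, and $\geq -1$ for $\epsilon=\rho$. The crucial input is the zero-cocycle condition: since $\Gamma_{\rho,u}\cap\sigma$ is connected, each $f_\sigma$ is a single scalar, and $f_\sigma=f_\tau$ whenever $\Gamma_{\rho,u}\cap\sigma\cap\tau\neq\emptyset$ (and similarly for $f'$). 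Hence if $n_\epsilon$ is a vertex of $\Gamma_{\rho,u}$ with $\epsilon\in\sigma\cap\tau\cap\gamma$, then $n_\epsilon$ lies in all three pairwise intersections, forcing $f_\sigma=f_\tau=f_\gamma$ and so $A_{\sigma\tau\gamma}=0$; a vertex of $\Gamma_{\rho',u'}$ likewise forces $f'_\sigma=f'_\tau=f'_\gamma$ and $A_{\sigma\tau\gamma}=0$. Therefore $A_{\sigma\tau\gamma}\neq 0$ implies $n_\epsilon\notin\Gamma_{\rho,u}$ and $n_\epsilon\notin\Gamma_{\rho',u'}$ for every such $\epsilon$, which by the vertex description of \S\ref{sec:obs} (using $\rho(u)=\rho'(u')=-1$) amounts to
\[
\big(\epsilon(u)\geq 0\ \text{or}\ \epsilon=\rho\big)\quad\text{and}\quad\big(\epsilon(u')\geq 0\ \text{or}\ \epsilon=\rho'\big).
\]

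I would then run the cases. If $\epsilon=\rho$, then $\rho\neq\rho'$ forces $\epsilon(u')\geq 0$, so $\epsilon(u+u')=-1+\rho(u')\geq -1$. If $\epsilon\neq\rho$, then $\epsilon(u)\geq 0$; when moreover $\epsilon(u')\geq 0$ we are done, and the only remaining possibility is $\epsilon=\rho'$, where $\epsilon(u')=-1$. This is exactly where the prefactor intervenes: $\kappa_{\sigma\tau\gamma}\neq 0$ forces $\rho'(u)\neq 0$, and together with $\epsilon(u)=\rho'(u)\geq 0$ this upgrades to $\rho'(u)\geq 1$, giving $\epsilon(u+u')=\rho'(u)-1\geq 0$. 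So $\chi^{u+u'}$ is a valid section wherever $\kappa_{\sigma\tau\gamma}\neq 0$, and the argument for $\kappa'$ is identical after exchanging $(\rho,u)$ with $(\rho',u')$, the edge case $\epsilon=\rho$ being absorbed by the prefactor $\rho(u')$. I expect the genuinely delicate point to be precisely this interplay between the vanishing of the prefactor and the membership of $\epsilon$ in $\{\rho,\rho'\}$, which is also where the hypothesis $\rho\neq\rho'$ is really used.
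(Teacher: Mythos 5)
Your proof is correct, but it differs from the paper's in both of its nontrivial steps, and in each case your route is the more uniform one. For the cocycle property, the paper argues indirectly: since $\theta=\eta(\kappa'-\kappa)$ is already a cocycle (being a cup product of cocycles) and $\partial(\rho,u+u')$, $\partial(\rho',u+u')$ are linearly independent for $\rho\neq\rho'$, the images of $d^2\kappa$ and $d^2\kappa'$ must separately vanish, hence so do $d^2\kappa$ and $d^2\kappa'$. You instead observe that the scalar coefficient $A_{\sigma\tau\gamma}$ is itself the coboundary $d^1(b)$ of $b_{\sigma\tau}=f_\sigma f'_\tau-f_\tau f'_\sigma$, which kills $d^2\kappa$ directly once well-definedness is known; this is self-contained and does not lean on Lemma \ref{lem:maptoh2}. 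For well-definedness, the paper reduces by bilinearity to the indicator cocycles $f(Z)$, $f'(Z')$ of connected components and runs a case analysis on the $0/1$ patterns of $(f_\sigma,f_\tau,f_\gamma)$ and $(f'_\sigma,f'_\tau,f'_\gamma)$, using convexity of $V_{\rho,u}\cap\sigma$ to derive contradictions; you work with arbitrary cocycles and extract the single clean statement that any ray $\epsilon$ of $\sigma\cap\tau\cap\gamma$ whose generator is a vertex of $\Gamma_{\rho,u}$ or of $\Gamma_{\rho',u'}$ forces $f_\sigma=f_\tau=f_\gamma$ (resp.\ $f'_\sigma=f'_\tau=f'_\gamma$) and hence $A_{\sigma\tau\gamma}=0$, after which the check of \eqref{eqn:div} is a three-line case split on $\epsilon\in\{\rho,\rho'\}$ that isolates exactly where $\rho\neq\rho'$ and the prefactor $\rho'(u)\neq 0$ are needed. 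The only point you should make explicit is the one fact both arguments silently share: $\Gamma_{\rho,u}\cap\sigma$ is connected (it is the complete graph on the vertices $n_\epsilon$ with $\epsilon\in\sigma^{(1)}$ satisfying the vertex condition), so each $f_\sigma$ is a single scalar and the cocycle condition reads $f_\sigma=f_\tau$ whenever $\Gamma_{\rho,u}\cap\sigma\cap\tau\neq\emptyset$; this is already implicit in the exactness of \eqref{eqn:complex}, so it is a remark rather than a gap. The trade-off is that the paper's component-wise analysis sets up the geometric picture ($Z\cap\sigma\neq\emptyset$ propagating by convexity) reused later for $\alpha(Z)$ in \S\ref{sec:cyclethm}, whereas your version is shorter and avoids the reduction entirely.
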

\begin{proof}
	It follows from the explicit description of $\eta$ in \eqref{eqn:euler} that the image of $\kappa'-\kappa$ is indeed $\theta$. So we just need to show that $\kappa$ and $\kappa'$ are two-cocycles. We will show below that for all $\sigma\tau\gamma$, $\kappa_{\sigma\tau\gamma}$ is an element of $H^0(U_{\sigma\cap\tau\cap\gamma},\CO(D_\rho))$. A similar statement will also hold for $\kappa'$. It then remains to show that $\kappa$ and $\kappa'$ are in the kernel of the differential $d^2$. But since for $\rho\neq \rho'$, $\partial(\rho,u+u')$ and $\partial(\rho',u+u')$ are linearly independent over $\KK$, the images of $\kappa$ and $\kappa'$ under the map induced by $\eta$ must lie in the kernel of the differential. It follows that $\kappa$ and $\kappa'$ must as well.

	So we are left to show the claim that $\kappa_{\sigma\tau\gamma}$ is an element of $H^0(U_{\sigma\cap\tau\cap\gamma},\CO(D_\rho))$ for arbitrary choice of $\sigma,\tau,\gamma$. By bilinearity of the cup product, it suffices to do this for the special cases when $f=f(Z)$ and $f'=f'(Z')$ as in \eqref{eqn:comp} for connected components $Z,Z'$ of $\Gamma_{\rho,u}$ and $\Gamma_{\rho',u'}$.
In the following, we shall fix such components $Z,Z'$.

To show that $\kappa_{\sigma\tau\gamma}$ is a regular section as desired, we need to show that either $\chi^{u+u'}\in H^0(U_{\sigma\cap\tau\cap\gamma},\CO(D_\rho))$, $\rho'(u)=0$, or
\begin{align}\label{eq:zz}
	\begin{split}&f_{\sigma}f_{\tau}'-f_{\tau}f_{\sigma}'
	+f_{\gamma}f_{\sigma}'-f_{\sigma}f_{\gamma}'
	+f_{\tau}f_{\gamma}'-f_{\gamma}f_{\tau}'\\
&=f_{\sigma}(f_\tau'-f_\gamma')+f_\tau(f_\gamma'-f_\sigma')+f_\gamma(f_\sigma'-f_\tau')=0.
\end{split}
\end{align}
Let us thus assume that neither $\rho'(u)$ nor the expression in \eqref{eq:zz} is zero. Thus, all of $f_\sigma',f_\tau',f_\gamma'$ can not be equal, and by symmetry the same is true for $f_\sigma,f_\tau,f_\gamma$. Using that $f=f(Z)$ and $f'=f'(Z')$ and the symmetry of the expression, we may assume without loss of generality that we are in one of two cases:
\begin{enumerate}
	\item $f_\sigma=1,f_\tau=f_\gamma=0$; or
	\item $f_\sigma=f_\tau=1,f_\gamma=0$.
\end{enumerate}

In the first case, we must thus have $f_\tau'\neq f_\gamma'$; without loss of generality $f_\tau'=1$ and $f_\gamma'=0$. In other words, our connected component $Z$ intersects $\sigma$ but not $\tau$ and $\gamma$, whereas $Z'$ intersects $\tau$ but not $\gamma$.

Consider any ray $\epsilon$ of $\sigma\cap\tau\cap\gamma$. By \eqref{eqn:div} we must show that $\epsilon(u+u')\geq 0$ if $\epsilon\neq \rho$, and $\epsilon(u+u')\geq -1$ if $\epsilon=\rho$. 
Suppose $\epsilon=\rho$; then $\epsilon(u+u')=\epsilon(u')-1$. 
If $\epsilon(u')<0$, then $\epsilon$ is a vertex of $\Gamma_{\rho',u'}$ (note we are assuming $\rho\neq\rho'$). Now, $\epsilon$ is in $\tau$, and $Z'$ intersects $\tau$, so by convexity, $\epsilon\in Z'$. But $\epsilon$ is also in $\gamma$, contradicting $Z'\cap\gamma=\emptyset$. Hence, $\epsilon(u')\geq0$, implying $\epsilon(u+u')\geq -1$ as required.

Suppose instead that $\epsilon=\rho'$;
 then $\epsilon(u+u')=\rho'(u+u')=\epsilon(u)-1$.
 If $\epsilon(u)<0$, then $\epsilon$ is a vertex of $\Gamma_{\rho,u}$. Since $\sigma\cap Z\neq \emptyset$, convexity again implies that $\epsilon\in Z$, but this contradicts $Z\cap \tau=\emptyset$. We have also assumed that $\rho'(u)\neq 0$, so we conclude that $\epsilon(u)\geq 1$, and $\epsilon(u+u')\geq 0$.

 Finally, supposed that $\epsilon\neq \rho,\rho'$.
Arguing similarly to above, we cannot have $\epsilon(u)<0$, since then we would obtain $\epsilon\in Z$, contradicting $Z\cap\tau=\emptyset$. Likewise, we cannot have $\epsilon(u')<0$. We thus conclude $\epsilon(u+u')\geq 0$.
This concludes the argument for the first case.

In the second case, we notice that we cannot have $f_\tau'=f_\sigma'$, and may argue as in the first case after appropriately permuting the roles of $\sigma$, $\tau$, and $\gamma$. 
\end{proof}
\subsection{Lifting to simplicial cohomology}
We are now able to come to our main result:
\begin{thm}\label{thm:main2}
	Fix $\rho,\rho'\in\Sigma^{(1)}$ and $u,u'\in M$ with $\rho(u)=\rho'(u')=-1$.
	\begin{enumerate}
		\item\label{item:one} The image of
\[\widetilde{H}^0(\Gamma_{\rho,u},\KK)\times \widetilde{H}^0(\Gamma_{\rho',u'},\KK)\]
in $H^2(X,\T_X)$ under the cup product via \eqref{eqn:t1} is $0$ unless $\rho(u')=0$ or $\rho'(u)=0$.
\item\label{item:two} Assume that $\rho(u')=0$, and let
	$f=(f_\sigma)$ and $f'=(f_\sigma')$ be \v{C}ech zero-cycles of $\Gamma_{\rho,u}$ and $\Gamma_{\rho',u'}$. 
	Then the cup product of $(\overline f,\overline f')$ is contained in $H^1(K_{\rho,u+u'})$ via \eqref{eqn:t2} and may be represented by the \v{C}ech one-cocycle $g=(g_{\sigma\tau})$ where
	\[
		g_{\sigma\tau}=\begin{cases}
			\frac{\rho'(u)}{2}(f_{\sigma}f_{\tau}'-f_{\tau}f_{\sigma}') & K_{\rho,u+u'}\cap\sigma\cap\tau\neq\emptyset,\\
			0& \textrm{otherwise}.
	\end{cases}
	\]
A similar formula holds when $\rho'(u)=0$.
\end{enumerate}
\end{thm}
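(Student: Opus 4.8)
The plan is to begin from the two-cocycle $\theta$ produced in Lemma \ref{lem:maptoh2}, which already represents the cup product in $\check{C}^2(\U,\T_X)$, and to push its class through the two isomorphisms defining the decomposition \eqref{eqn:t2}: first the Euler-sequence splitting $\bigoplus_\rho H^2(X,\CO(D_\rho))\cong H^2(X,\T_X)$ of Lemma \ref{lemma:tcohom}, and then the connecting isomorphism \eqref{eqn:scohom} identifying each graded piece $H^2(X,\CO(D_\rho))_{u+u'}$ with $\widetilde{H}^1(V_{\rho,u+u'},\KK)=H^1(K_{\rho,u+u'},\KK)$. Before doing this I would dispose of the case $\rho=\rho'$: substituting $\rho=\rho'$ together with $\rho(u)=\rho'(u')=-1$ into the factor $\rho(u')\partial(\rho',u+u')-\rho'(u)\partial(\rho,u+u')$ appearing in $\theta$ gives $(\rho(u')-\rho(u))\partial(\rho,u+u')=0$, so $\theta$ vanishes identically and there is nothing to prove. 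For the remainder I may therefore assume $\rho\neq\rho'$ and invoke Lemma \ref{lem:maptodh2}, which writes $\theta=\eta(\kappa'-\kappa)$ with $\kappa,\kappa'$ explicit degree-$(u+u')$ cocycles for $\CO(D_\rho)$ and $\CO(D_{\rho'})$ respectively.

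For part \eqref{item:one} I would run a degree count. The cocycle $\kappa$ represents a class in $H^2(X,\CO(D_\rho))_{u+u'}$, and since $\rho(u+u')=\rho(u)+\rho(u')=-1+\rho(u')$, Proposition \ref{prop:obs2} (equivalently the vanishing of $\widetilde{H}^1(V_{\rho,u+u'},\KK)$ unless $\rho(u+u')=-1$) forces this class to be zero unless $\rho(u')=0$; symmetrically the $\CO(D_{\rho'})$-class of $\kappa'$ is zero unless $\rho'(u)=0$. Since $\kappa$ and $\kappa'$ lie in distinct summands of the splitting of Lemma \ref{lemma:tcohom}, the cup product vanishes whenever both $\rho(u')\neq 0$ and $\rho'(u)\neq 0$. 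One can also read this off directly from the scalar prefactors $\rho'(u)$ and $\rho(u')$ attached to $\kappa$ and $\kappa'$.

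For part \eqref{item:two}, assume $\rho(u')=0$. Then $\kappa'=0$, so the cup product is the class of $\pm\kappa$ in $H^2(X,\CO(D_\rho))_{u+u'}$, and it remains to compute the image of $\kappa$ under the inverse of the connecting isomorphism \eqref{eqn:scohom}. Concretely, I would view the coefficients of $\kappa$ as an element of $\check{C}^2(\W,\KK)$ via the inclusion in \eqref{eqn:complex}, exhibit an explicit primitive $h\in\check{C}^1(\W,\KK)$ whose $d^1$ equals these coefficients, and then restrict $h$ along the surjection $\check{C}^1(\W,\KK)\to\check{C}^1(\V,\KK)$ of \eqref{eqn:complex}, where $\V=\{V_{\rho,u+u'}\cap\sigma\}$. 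The natural guess $h_{\sigma\tau}=\tfrac{\rho'(u)}{2}(f_\sigma f'_\tau-f_\tau f'_\sigma)$ works: a one-line computation of $d^1 h$ reproduces exactly the antisymmetrized bracket defining $\kappa$. The restriction sends $h_{\sigma\tau}$ to its value where $V_{\rho,u+u'}\cap\sigma\cap\tau\neq\emptyset$ and to $0$ otherwise, exactness of \eqref{eqn:complex} guarantees the restriction is again a cocycle, and since $H^1$ of a simplicial complex depends only on its two-skeleton, this cocycle represents the same class on $K_{\rho,u+u'}$ with support condition $K_{\rho,u+u'}\cap\sigma\cap\tau\neq\emptyset$, giving precisely the $g$ of the statement. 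The case $\rho'(u)=0$ is symmetric.

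The step I expect to be most delicate is the explicit inversion of the connecting homomorphism \eqref{eqn:scohom}: one must choose the primitive $h$ so that its differential lands in the image of $\check{C}^2(\U,\CO(D_\rho))_{u+u'}$ and reproduces $\kappa$ on the nose, and one must track carefully the signs introduced both by $\theta=\eta(\kappa'-\kappa)$ and by the connecting map so that the final representative is $g$ and not $-g$. The remaining bookkeeping — verifying that the restriction of $h$ to $\V$ is genuinely a cocycle and that passing to the two-skeleton changes neither $H^1$ nor the support condition — is routine given the exactness in \eqref{eqn:complex} and the fact that a nonempty subcomplex contains a vertex, which lies in $K_{\rho,u+u'}$.
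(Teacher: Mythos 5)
Your proposal follows the paper's proof essentially verbatim: the same disposal of the case $\rho=\rho'$ via the vanishing of $\rho(u')\partial(\rho',u+u')-\rho'(u)\partial(\rho,u+u')$, the same degree-count through Lemma \ref{lem:maptodh2} and Proposition \ref{prop:obs2} for part \eqref{item:one}, and the same explicit primitive $h_{\sigma\tau}=\tfrac{\rho'(u)}{2}(f_\sigma f'_\tau-f_\tau f'_\sigma)$ in $\check{C}^1(\W,\KK)$ inverting the connecting homomorphism for part \eqref{item:two}. The sign issue you flag does work out: $d^1h$ is the image of $-\kappa$, which is exactly the representative $\theta=\eta(\kappa'-\kappa)=\eta(-\kappa)$ of the cup product, so the representative is $g$ as stated.
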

\begin{proof}
	Let $f,f'$ be \v{C}ech zero-cocyles of $\Gamma_{\rho,u}$ and $\Gamma_{\rho',u'}$.
	We first show item \ref{item:one}. If $\rho=\rho'$, then Lemma \ref{lem:maptoh2} implies that the image of the cup product of the classes of $f$ and $f'$ in $H^2(X,\T_X)$ is zero. So we may henceforth assume that $\rho\neq\rho'$. By Lemma \ref{lem:maptodh2}, the cup product of the classes of $f$ and $f'$ may be represented by a cocyle in
	\[
		\bigoplus_{\rho\in\Sigma^{(1)}} H^2(X,\CO(D_\rho))_{u+u'}
	\]
	living entirely in the $\rho$ and $\rho'$ summands. But it follows from Proposition \ref{prop:obs2} that these vanish respectively unless $\rho(u+u')=-1$ or $\rho'(u+u')=-1$. Given $\rho(u)=\rho'(u')=-1$, this is equivalent to $\rho(u')=0$ or $\rho'(u)=0$. This completes the proof of the first item.

	For item \ref{item:two}, assume now that $\rho(u')=0$. By Lemma \ref{lem:maptodh2}, it follows that the cup product of the classes of $f$ and $f'$ may be thought of as a class in $H^2(X,\CO(D_\rho))_{u+u'}$, represented by the cocycle $-\kappa$ with
	\begin{align*}
	&\kappa_{\sigma\tau\gamma}	=\frac{\rho'(u)}{2}\Big(
	f_{\sigma}f_{\tau}'-f_{\tau}f_{\sigma}'
	+f_{\gamma}f_{\sigma}'-f_{\sigma}f_{\gamma}'
	+f_{\tau}f_{\gamma}'-f_{\gamma}f_{\tau}'
	\Big)
	\cdot \chi^{u+u'}.
	\end{align*}

	We consider the element $\tilde{g}\in\check{C}^1(\W,\KK)$ defined by 
	\[
		\tilde{g}_{\sigma\tau}=
		\frac{\rho'(u)}{2}(f_{\sigma}f_{\tau}'-f_{\tau}f_{\sigma}').
	\]
	On the one hand, the image of $\tilde{g}$ in $\check{C}^1(\V,\KK)$ is exactly the one-cochain $g$ from the statement of the theorem.
	On the other hand, we may compute that \[
		d^1(\tilde{g})_{\sigma\tau\gamma}=
		\frac{\rho'(u)}{2}(f_\tau f_\sigma'-f_\sigma f_\tau'+f_\sigma f_\gamma'-f_\gamma f_\sigma'+f_\gamma f_\tau'- f_\tau f_\gamma')
	\]
	in $\check{C}^2(\W,\KK)$.
	This is exactly the image of $-\kappa$ under the inclusion \[\check{C}^2(\U,\CO(D_\rho))_{u+u'}\hookrightarrow \check{C}^2(\W,\KK).\]

	The exact sequence \eqref{eqn:complex} and its compatibility with the \v{C}ech differentials implies that $g$ is a cocycle in $\check{C}^1(\V,\KK)$, and that the image of its cohomology class under the connecting homomorphism is represented by $-\kappa$. This completes the proof of the second claim.
\end{proof}

\begin{rem}
With notation as in Theorem \ref{thm:main2}, one might wonder what happens to the cup product when \emph{both} $\rho(u')=0$ and $\rho'(u)=0$. It follows directly from the second part of the theorem that the image of this part of the cup product vanishes.
\end{rem}

\section{Pulling back to cycles}\label{sec:cycle}
\subsection{Setup}
Let $\alpha$ be a simple cycle in $K_{\rho,u+u'}$, that is, an oriented connected subgraph of the edges of $K_{\rho,u+u'}$ in which no edges are repeated and every vertex has degree $2$. Such a cycle $\alpha$ gives rise to a one-cycle $[\alpha]$ in the simplicial homology $H_1(K_{\rho,u+u'},\KK)$ by considering the sum
\[
	[\alpha]=\sum_{E\in \alpha} \pm E
\]
with signs depending on the orientation of $\alpha$ and the chosen orientation of $K_{\rho,u+u'}$.
This similarly determines a distinguished generator of $H_1(\alpha,\KK)$ which we will also denote by $[\alpha]$.
We denote the element in $H^1(\alpha,\KK)$ dual to the class in $H_1(\alpha,\KK)$ of $[\alpha]$
by $\alpha_\fun$.

\begin{defn}
	A \emph{$\Sigma$-reduced cycle} is a simple cycle  $\alpha$  in $K_{\rho,u+u'}$, with $[\alpha]$ not  homologous to zero, such that no edges of $\alpha$ are contained in a common cone of $\Sigma$.
\end{defn}
By the following lemma, it will suffice to consider only $\Sigma$-reduced cycles:
\begin{lemma}
	Any class in $H_1(K_{\rho,u+u'},\KK)$ may be written as a sum of classes of $\Sigma$-reduced cycles.
\end{lemma}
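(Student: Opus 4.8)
The plan is to reduce to single simple cycles and then induct on the number of edges. First I would invoke the standard fact that the simplicial $1$-cycle space of the graph underlying $K_{\rho,u+u'}$ is spanned over $\KK$ by its simple cycles; consequently every class in $H_1(K_{\rho,u+u'},\KK)$ is a $\KK$-linear combination of classes $[\alpha]$ of simple cycles $\alpha$. Since a $\Sigma$-reduced cycle is by definition required to have $[\alpha]$ not homologous to zero, it suffices to show that for each simple cycle $\alpha$ the class $[\alpha]$ is a sum of classes of $\Sigma$-reduced cycles, with the convention that a null-homologous $\alpha$ contributes the empty sum.

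The key geometric input is the simpliciality of $\Sigma$: for any cone $\sigma\in\Sigma$, the vertices of $V_{\rho,u+u'}$ lying in $\sigma$ are of the form $n_\epsilon$ for rays $\epsilon$ of $\sigma$, and these have linearly independent primitive generators, so they span a simplex of $V_{\rho,u+u'}$. Hence any two such vertices are joined by an edge, and any three bound a $2$-simplex, all of which lie in the $2$-skeleton $K_{\rho,u+u'}$. Now I would induct on the number of edges of $\alpha$. If no two edges of $\alpha$ lie in a common cone, then $\alpha$ is either $\Sigma$-reduced or null-homologous and we are done. Otherwise there are two distinct edges $E,E'$ of $\alpha$ contained in a common cone $\sigma$, and all of their endpoints lie in $\sigma$.

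I would then split into two cases. If $E$ and $E'$ are adjacent, say $\alpha$ traverses $v\to w\to x$ with $E=\{v,w\}$ and $E'=\{w,x\}$, then $v,w,x\in\sigma$ bound a $2$-simplex $F$ of $K_{\rho,u+u'}$; replacing the subpath $v\to w\to x$ by the single edge $\{v,x\}$ changes $\alpha$ by $\partial F$, hence preserves the homology class, and strictly decreases the edge count. If $E$ and $E'$ are non-adjacent, writing $\alpha=v_0v_1\cdots v_{n-1}$ cyclically with $E=\{v_a,v_{a+1}\}$ and $E'=\{v_b,v_{b+1}\}$, the chord $c=\{v_{a+1},v_{b+1}\}$ is an edge of $K_{\rho,u+u'}$ (both endpoints lie in $\sigma$) and splits $\alpha$ into two simple cycles $\alpha_1$ (the arc from $v_{a+1}$ to $v_{b+1}$ together with $c$) and $\alpha_2$ (the complementary arc together with $c$); one checks $[\alpha]=[\alpha_1]+[\alpha_2]$, since $c$ occurs with opposite orientations and cancels, while both $\alpha_1$ and $\alpha_2$ have strictly fewer edges than $\alpha$. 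In either case the resulting pieces are $1$-cycles, decomposed into simple cycles if necessary, each with strictly fewer edges, so the inductive hypothesis applies and finishes the proof.

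The main obstacle I expect is bookkeeping the degeneracies: verifying that the shortcut edge $\{v,x\}$ and the chord $c$ are genuinely new edges so that the shortcut cycle and $\alpha_1,\alpha_2$ remain \emph{simple}, that the edge count strictly drops in every case so the induction terminates, and that the $2$-simplex $F$ and the chord $c$ really lie in $K_{\rho,u+u'}$ — which is exactly where the simpliciality of $\Sigma$ is used. None of these is deep, but all must be checked to make the induction airtight.
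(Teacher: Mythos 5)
Your proof is correct and follows essentially the same route as the paper: reduce to a single simple cycle, then repeatedly shorten it by replacing two adjacent edges in a common cone with the third edge of the $2$-simplex they span, or split it along a chord when the two coplanar edges are disjoint, descending on the edge count. The extra care you take with the degeneracies (the chord being a genuinely new edge, the triangle case being null-homologous) is exactly the bookkeeping the paper leaves implicit.
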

\begin{proof}
	It suffices to show the lemma for classes represented by $[\beta]$, for some simple cycle $\beta$. Suppose that $\beta$ is not $\Sigma$-reduced, with edges $E_1$ and $E_2$ contained in a cone $\sigma$. If $E_1=[v_1,v_2]$ and $E_2=[v_2,v_3]$, then we may replace these edges in $\beta$ by $E=[v_1,v_3]$ and obtain an equivalent homology class. The resulting simple cycle $\beta'$ has one fewer edge.
	
	Similarly, if $E_1\cap E_2=\emptyset$, we may split $\beta$ into simple cycles $\beta'$, $\beta''$ such that $[\beta]$ is homologous to $[\beta']+[\beta'']$ and both these cycles have fewer edges.

	The claim now follows by infinite descent.
\end{proof}

Given a $\Sigma$-reduced cycle $\alpha$, the closed cover $\V$ of $K_{\rho,u+u'}$ induces a closed cover $\V^\alpha=\{V_\sigma^\alpha\}$ of $\alpha$:
\[
V_\sigma^\alpha=\alpha\cap\sigma.
\]
This closed cover is contractible, so $H^p(\V^\alpha,\KK)=H^p(\alpha,\KK)$.

There is a natural map of complexes 
\[
	\check{C}^\bullet(\V,\KK)\to
	\check{C}^\bullet(\V^\alpha,\KK)
\]
induced by restriction. The corresponding map of cohomology 
\[
	\iota_\alpha^*:H^p(K_{\rho,u+u'},\KK)\to H^p(\alpha,\KK)
\]
is the pullback morphism corresponding to the inclusion $\iota_\alpha:\alpha\hookrightarrow K_{\rho,u+u'}$.
In particular, for any $\omega\in H^1(K_{\rho,u+u'},\KK)$, 

\[
	\iota_\alpha^*(\omega)=\langle \alpha,\omega\rangle \cdot \alpha_\fun
\]
where $\langle ~,~\rangle$ denotes the pairing between $H_1$ and $H^1$.

\subsection{Computing the cup product}\label{sec:cyclethm}
Fix a $\Sigma$-reduced cycle $\alpha$, and let $Z$ and $Z'$ be connected components of $\Gamma_{\rho,u}$ and $\Gamma_{\rho',u'}$. 
As in Theorem \ref{thm:main2}, assume that $\rho(u')=0$.
We now show how to compute $\iota_\alpha^{*}(\omega)$ directly, where $\omega$ is the class of $H^1(K_{\rho,u+u'},\KK)$ corresponding to the cup product 
 of the classes of $f(Z)$ and $f'(Z')$.

Let $E(\alpha)$ denote the set of edges $\alpha$. We will write
\[
	E(\alpha)=\{E_1,\ldots,E_k\}	
\]
with the edges ordered cyclically modulo $k$ ($E_{i+1}$ is the edge following $E_i$).
For each edge $E_i$ of $\alpha$ we fix a maximal cone $\sigma_i$ with $\sigma_i\cap\alpha=E_i$. These cones exist since $\alpha$ is $\Sigma$-reduced.

To $Z$ we associate a subset $\alpha(Z)$ of $E(\alpha)$:
\begin{align*}
	\alpha(Z)=\{E_i\in E(\alpha)\ |\ Z\cap \sigma_i\neq \emptyset\}.
\end{align*}
The set $\alpha(Z')$ is defined analogously.
An index $1\leq i \leq k$ is \emph{relevant} if 
\begin{align*}
	&\alpha(Z)\cap\{E_i,E_{i+1}\},\quad\alpha(Z')\cap\{E_i,E_{i+1}\}
\end{align*}
are not equal but both non-empty, and
their union contains both $E_i$ and $E_{i+1}$. Here, indices are taken modulo $k$.

We set
\begin{align*}
Z*_\alpha Z':&=
\frac{\rho'(u)}{2}\cdot \sum_{i\ \textrm{relevant}}b_i\\
b_i&=\begin{cases}
	1 & E_i\in \alpha(Z),\ E_{i+1} \in \alpha(Z'),\\
	-1 & E_i\in \alpha(Z'),\ E_{i+1} \in \alpha(Z).\\
\end{cases}
\end{align*}
See Figure \ref{fig:bi} for an illustration of various values of $b_i$: for each relevant index $i$, the value of $b_i$ is recorded in parenthesis next to the edge $E_i$. For example in the leftmost case, there are two relevant indices, each with $b_i=1$.

\begin{figure}
	\begin{tikzpicture}[scale=.5]
		\begin{scope}[shift={(-.07,.07)}]	
			\draw [blue,dotted, very thick] (6,6) -- (4,5) -- (3,3);
			\draw [blue] node at (2.5,5) {$\alpha(Z)$};
		\end{scope}
		\begin{scope}[shift={(.07,-.07)}]	
			\draw [red,very thick] (4,5) -- (3,3) -- (3,1);
			\draw [red] node at (4.5,3) {$\alpha(Z')$};
		\end{scope}
		\draw node at (5,6.2) {$(1)$};
		\draw node at (2.5,4) {$(1)$};
		\draw [->] (6,6) -> (5,5.5);
		\draw  (5,5.5) -> (4,5);
		\draw [->] (4,5) -> (3.5,4);
		\draw  (3.5,4) -> (3,3);
		\draw [->] (3,3) -> (3,2);
		\draw  (3,2) -> (3,1);
		\draw [->] (3,1) -> (4,0);
		\draw  (4,0) -> (5,-1);
		\draw [fill=black] (6,6) circle [, radius=.08];	
		\draw [fill=black] (4,5) circle [, radius=.08];	
		\draw [fill=black] (3,3) circle [, radius=.08];	
		\draw [fill=black] (3,1) circle [, radius=.08];	
		\draw [fill=black] (5,-1) circle [, radius=.08];
		\draw [dashed] (5,-1) -- (6,-1);
		\draw [dashed] (6,6) -- (7,6);

		\begin{scope}[shift={(9,0)}]	
		\begin{scope}[shift={(-.12,.0)}]	
			\draw [blue,dotted,very thick] (6,6) -- (4,5) -- (3,3) -- (3,1) -- (5,-1);
			\draw [blue] node at (2.5,5) {$\alpha(Z)$};
		\end{scope}
		\begin{scope}[shift={(.12,0)}]	
			\draw [red,very thick] (4,5) -- (3,3) -- (3,1);
			\draw [red] node at (4.5,3) {$\alpha(Z')$};
		\end{scope}
		\draw node at (5,6.2) {$(1)$};
		\draw node at (2,2) {$(-1)$};
		\draw [->] (6,6) -> (5,5.5);
		\draw  (5,5.5) -> (4,5);
		\draw [->] (4,5) -> (3.5,4);
		\draw  (3.5,4) -> (3,3);
		\draw [->] (3,3) -> (3,2);
		\draw  (3,2) -> (3,1);
		\draw [->] (3,1) -> (4,0);
		\draw  (4,0) -> (5,-1);
		\draw [fill=black] (6,6) circle [, radius=.08];	
		\draw [fill=black] (4,5) circle [, radius=.08];	
		\draw [fill=black] (3,3) circle [, radius=.08];	
		\draw [fill=black] (3,1) circle [, radius=.08];	
		\draw [fill=black] (5,-1) circle [, radius=.08];
		\draw [dashed] (5,-1) -- (6,-1);
		\draw [dashed] (6,6) -- (7,6);
	\end{scope}
	
		\begin{scope}[shift={(18,0)}]	
		\begin{scope}[shift={(-.12,.0)}]	
			\draw [blue,dotted, very thick] (6,6) -- (4,5) -- (3,3);
			\draw [blue] node at (2.5,5) {$\alpha(Z)$};
		\end{scope}
		\begin{scope}[shift={(.12,0)}]	
			\draw [red,very thick] (3,3) -- (3,1);
			\draw [red] node at (4.5,3) {$\alpha(Z')$};
		\end{scope}
		\draw [->] (6,6) -> (5,5.5);
		\draw  (5,5.5) -> (4,5);
		\draw [->] (4,5) -> (3.5,4);
		\draw  (3.5,4) -> (3,3);
		\draw [->] (3,3) -> (3,2);
		\draw  (3,2) -> (3,1);
		\draw [->] (3,1) -> (4,0);
		\draw  (4,0) -> (5,-1);
		\draw [fill=black] (6,6) circle [, radius=.08];	
		\draw [fill=black] (4,5) circle [, radius=.08];	
		\draw [fill=black] (3,3) circle [, radius=.08];	
		\draw [fill=black] (3,1) circle [, radius=.08];	
		\draw [fill=black] (5,-1) circle [, radius=.08];
		\draw [dashed] (5,-1) -- (6,-1);
		\draw [dashed] (6,6) -- (7,6);
		\draw node at (2.8,4) {$(1)$}; 
	\end{scope}
	\end{tikzpicture}

	\caption{Values of $b_i$}\label{fig:bi}
\end{figure}
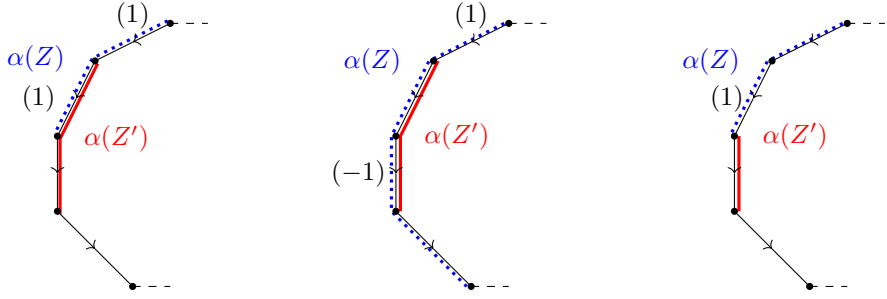

\begin{thm}\label{thm:cycle}
With the above notation,
\[
\iota_\alpha(\omega)^*=(Z*_\alpha Z')\cdot\alpha_\fun.
\]
In particular, the image of $\widetilde{H}^0(\Gamma_{\rho,u},\KK)\times\widetilde{H}^0(\Gamma_{\rho',u'}, \KK)$ under the cup product is zero if and only if for all $\Sigma$-reduced cycles $\alpha$ and all connected components $Z,Z'$ of $\Gamma_{\rho,u}$ and $\Gamma_{\rho',u'}$, $Z*_{\alpha}Z'=0$.
\end{thm}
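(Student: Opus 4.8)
The plan is to compute the pullback $\iota_\alpha^*(\omega)$ explicitly on the \v{C}ech cochain level using the cover $\V^\alpha$ of $\alpha$, and to match the resulting class in $H^1(\alpha,\KK)=\KK\cdot\alpha_\fun$ against the pairing formula $\iota_\alpha^*(\omega)=\langle\alpha,\omega\rangle\cdot\alpha_\fun$. By bilinearity of the cup product it suffices to treat the case $f=f(Z)$, $f'=f'(Z')$, so that Theorem \ref{thm:main2}(\ref{item:two}) gives an explicit representative one-cocycle $g=(g_{\sigma\tau})$ for $\omega$. Pulling $g$ back along the restriction map $\check{C}^\bullet(\V,\KK)\to\check{C}^\bullet(\V^\alpha,\KK)$ yields a one-cocycle on $\alpha$ supported on pairs $\sigma,\tau$ whose intersections with $\alpha$ are the consecutive edges $E_i,E_{i+1}$. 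The first main step is thus to rewrite $\iota_\alpha^*(g)$ using the chosen maximal cones $\sigma_i$ (with $\sigma_i\cap\alpha=E_i$), so that its value along the ``link'' between $E_i$ and $E_{i+1}$ becomes $g_{\sigma_i\sigma_{i+1}}=\tfrac{\rho'(u)}{2}\bigl(f(Z)_{\sigma_i}f'(Z')_{\sigma_{i+1}}-f(Z)_{\sigma_{i+1}}f'(Z')_{\sigma_i}\bigr)$ when $K_{\rho,u+u'}\cap\sigma_i\cap\sigma_{i+1}\neq\emptyset$.

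The second step is to identify this cochain value with the combinatorial quantity $b_i$. Note that $f(Z)_{\sigma_i}=1$ exactly when $Z\cap\sigma_i\neq\emptyset$, i.e.\ when $E_i\in\alpha(Z)$, and similarly for $Z'$; hence the antisymmetric product $f(Z)_{\sigma_i}f'(Z')_{\sigma_{i+1}}-f(Z)_{\sigma_{i+1}}f'(Z')_{\sigma_i}$ equals $+1$, $-1$, or $0$, and a short case check against the definition of \emph{relevant} index shows it equals precisely $b_i$ when $i$ is relevant and $0$ otherwise. This recovers the normalization $\tfrac{\rho'(u)}{2}$ and the sign convention in the definition of $Z*_\alpha Z'$.

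The third and most delicate step is to pass from a sum of cochain values to the actual cohomology pairing. Since $H^1(\alpha,\KK)$ is one-dimensional, evaluating the class of $\iota_\alpha^*(g)$ against the fundamental cycle $[\alpha]=\sum_i \pm E_i$ amounts to summing the cochain around the cycle. The key point is that $\iota_\alpha^*(g)$, as a one-cocycle on the nerve of $\V^\alpha$, represents the class $\langle\alpha,\omega\rangle\cdot\alpha_\fun$, and summing its consecutive values $g_{\sigma_i\sigma_{i+1}}$ cyclically computes exactly this pairing; this gives $\langle\alpha,\omega\rangle=\sum_i g_{\sigma_i\sigma_{i+1}}=\tfrac{\rho'(u)}{2}\sum_{i\ \textrm{relevant}}b_i=Z*_\alpha Z'$, proving the main equality. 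I expect the principal obstacle to be the bookkeeping in this third step: one must verify that the intersection conditions $K_{\rho,u+u'}\cap\sigma_i\cap\sigma_{i+1}\neq\emptyset$ are automatically satisfied along a $\Sigma$-reduced cycle (or that the terms where they fail contribute zero), and confirm that the independence of $\sigma_i$ from the particular choice of maximal cone containing $E_i$ follows from the cocycle condition on $g$, so that $Z*_\alpha Z'$ is well defined.

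For the final ``in particular'' statement, the plan is to combine this computation with the fact, recorded in \S\ref{sec:cycle}, that the classes of $\Sigma$-reduced cycles span $H_1(K_{\rho,u+u'},\KK)$ together with the nondegeneracy of the pairing between $H_1$ and $H^1$. The cup product image vanishes if and only if $\omega=0$ for every choice of components $Z,Z'$, which by nondegeneracy holds if and only if $\langle\alpha,\omega\rangle=0$ for all cycles $\alpha$; since the spanning cycles may be taken $\Sigma$-reduced, this is equivalent to $Z*_\alpha Z'=0$ for all $\Sigma$-reduced $\alpha$ and all $Z,Z'$. Here I would invoke that the $f(Z)$ span $\widetilde H^0(\Gamma_{\rho,u},\KK)$, as noted after \eqref{eqn:comp}, to reduce the general vanishing statement to these distinguished classes.
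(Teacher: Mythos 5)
Your proposal is correct and follows essentially the same route as the paper: restrict the cocycle $g$ from Theorem \ref{thm:main2} to the cover of $\alpha$, pass to the coarser cyclic cover indexed by the chosen cones $\sigma_i$ (the paper formalizes your "rewriting" step as a refinement map $\check{C}^p(\W^\alpha,\KK)\to\check{C}^p(\V^\alpha,\KK)$ with a section, both inducing isomorphisms on cohomology), identify $\tfrac{\rho'(u)}{2}(f_{\sigma_i}f'_{\sigma_{i+1}}-f_{\sigma_{i+1}}f'_{\sigma_i})$ with $\tfrac{\rho'(u)}{2}b_i$, and sum cyclically to evaluate against $\alpha_\fun$. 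The worries you flag in your third step are benign: consecutive edges $E_i,E_{i+1}$ share a vertex lying in $K_{\rho,u+u'}\cap\sigma_i\cap\sigma_{i+1}$, so that intersection is automatically nonempty, and well-definedness is handled by the refinement/section argument on cohomology.
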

\begin{proof}
	We know by Theorem \ref{thm:main2} that $\omega$ is represented in 
	$H^1(K_{\rho,u+u'},\KK)$ by the cocyle $g=(g_{\sigma\tau})$, where
\[		g_{\sigma\tau}=\begin{cases}
			\frac{\rho'(u)}{2}(f_{\sigma}f_{\tau}'-f_{\tau}f_{\sigma}') & K_{\rho,u+u'}\cap\sigma\cap\tau\neq\emptyset,\\
			0& \textrm{otherwise}.
	\end{cases}
\]
The image of this cocyle under the map
\[
	\check{C}^1(\V,\KK)\to
	\check{C}^1(\V^\alpha,\KK)
\]
is thus $(h_{\sigma\tau})$ where
\[		h_{\sigma\tau}=\begin{cases}
			\frac{\rho'(u)}{2}(f_{\sigma}f_{\tau}'-f_{\tau}f_{\sigma}') & \alpha\cap\sigma\cap\tau\neq\emptyset,\\
			0& \textrm{otherwise}.
	\end{cases}
\]

There is an easier closed cover $\W^{\alpha}=(E_i)$ of $\alpha$ that we would like to use; it is indexed by $1\leq i \leq k$. This is again a closed cover with all intersections contractible, so it also computes $H^1(\alpha,\KK)$.
The assignment $i\mapsto \sigma_{i}$ lets us view $\V^{\alpha}$ as a refinement of $\W^\alpha$, and induces a map of \v{C}ech complexes
\[
	\check{C}^p(\W^\alpha,\KK)\to \check{C}^p(V^\alpha,\KK)
\]
see e.g.~\cite[Exercise III.4.4]{hartshorne}.
This map has a natural section given by forgetting entries with indices not among the $\sigma_i$; both maps induce isomorphisms on cohomology.

Hence, we may represent 
$\iota_\alpha^*(\omega)$ as a \v{C}ech one-cocycle with respect to $\W^\alpha$ by $(a_{ij})$ with
\[		a_{ij}=\begin{cases}
		\frac{\rho'(u)}{2}(f_{\sigma_{i}}f_{\sigma_j}'-f_{\sigma_j}f_{\sigma_i}') & i-j\equiv \pm 1 \mod k,\\
			0& \textrm{otherwise}.
	\end{cases}
\]
On the other hand, a straightforward computation shows that 
for any 
\v{C}ech one-cocycle $c=(c_{ij})$ with respect to $\W^\alpha$, the class represented by $c$ in $H^1(\alpha,\KK)$ is 
\[
	\left(\sum_{i=1}^k c_{i(i+1)}\right)\cdot \alpha_\fun
\]
where indices are taken modulo $k$.

We thus compute 
\begin{align*}
	\sum_{i=1}^k a_{i(i+1)}
	=\sum_{i=1}^k		\frac{\rho'(u)}{2}(f_{\sigma_{i}}f_{\sigma_{i+1}}'-f_{\sigma_{i+1}}f_{\sigma_i}')\\
	=\sum_{i\ \textrm{relevant}}		\frac{\rho'(u)}{2}(f_{\sigma_{i}}f_{\sigma_{i+1}}'-f_{\sigma_{i+1}}f_{\sigma_i}')\\
	=\sum_{i\ \textrm{relevant}}		\frac{\rho'(u)}{2}
b_i=Z*_\alpha Z'.\end{align*}
This completes the proof.
\end{proof}

\section{An Obstructed Example}\label{sec:ex}
We now consider the following concrete example. Let $N=\ZZ^3$, and define rays\begin{align*}
	\rho_1= \QQ_{\geq 0} \cdot (1,0,0),\qquad
	\rho_2= \QQ_{\geq 0} \cdot (1,0,-1),\qquad
	\rho_3= \QQ_{\geq 0} \cdot (1,0,1),\\
	\rho_4= \QQ_{\geq 0} \cdot (2,-1,0),\qquad
	\rho_5= \QQ_{\geq 0} \cdot (1,-1,0),\qquad
	\rho_6= \QQ_{\geq 0} \cdot (1,1,0),\\
	\rho_7= \QQ_{\geq 0} \cdot (0,1,-1),\qquad
	\rho_8= \QQ_{\geq 0} \cdot (0,1,1),\qquad
	\rho_9= \QQ_{\geq 0} \cdot (-1,0,0).
\end{align*}
These $\rho_i$ form the rays of a smooth complete fan $\Sigma$ whose maximal cones are spanned by
\begin{align*}
\rho_1,\rho_2,\rho_4,\qquad
\rho_1,\rho_2,\rho_7,\qquad
\rho_1,\rho_3,\rho_4,\\
\rho_1,\rho_3,\rho_8,\qquad
\rho_1,\rho_6,\rho_7,\qquad
\rho_1,\rho_6,\rho_8,\\
\rho_2,\rho_4,\rho_5,\qquad
\rho_2,\rho_5,\rho_9,\qquad
\rho_2,\rho_7,\rho_9,\\
\rho_3,\rho_4,\rho_5,\qquad
\rho_3,\rho_5,\rho_9,\qquad
\rho_3,\rho_8,\rho_9,\\
\rho_6,\rho_7,\rho_9,\qquad
\rho_6,\rho_8,\rho_9.
\end{align*}
We will see using Theorem \ref{thm:cycle} that $X_\Sigma$ has non-vanishing cup-product, and hence obstructed deformations. This will show:
\begin{cor}\label{cor:ob2}
There exists a smooth complete toric threefold with obstructed deformations.
\end{cor}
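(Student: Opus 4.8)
The plan is to verify Corollary~\ref{cor:ob2} by exhibiting a single character $u^*\in M$, a single ray, and a single $\Sigma$-reduced cycle for which the pairing $Z*_\alpha Z'$ of Theorem~\ref{thm:cycle} is nonzero. Since the fan $\Sigma$ is explicitly given and is asserted to be smooth and complete, the bulk of the work is bookkeeping on a concrete combinatorial object, and Theorem~\ref{thm:cycle} reduces the question of obstructedness to a finite, checkable count. First I would confirm that $\Sigma$ really is a smooth complete fan: check that the $14$ listed maximal cones cover $N_\QQ$ with no overlaps (e.g.\ that they glue along shared two-dimensional faces to form a fan), and that each triple of primitive ray generators forms a $\ZZ$-basis of $N=\ZZ^3$, i.e.\ has determinant $\pm 1$. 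This guarantees $X_\Sigma$ is a smooth complete toric threefold, so that $H^2(X,\T_X)=T^2_X$ is genuinely the obstruction space and a nonzero cup product forces obstructed deformations.

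Next I would search for suitable $(\rho,u)$ and $(\rho',u')$ with $\rho(u)=\rho'(u')=-1$ and (say) $\rho(u')=0$, so that part~\eqref{item:two} of Theorem~\ref{thm:main2} applies and the cup product lands in $H^1(K_{\rho,u+u'},\KK)$. For each candidate I would compute the vertex set of $V_{\rho,u}$ from the explicit criterion (vertices are rays $\epsilon$ with $\epsilon(u)<0$ for $\epsilon\neq\rho$, and $\epsilon(u)<-1$ for $\epsilon=\rho$), extract the graphs $\Gamma_{\rho,u}$ and $\Gamma_{\rho',u'}$ and their connected components $Z,Z'$, and build the two-skeleton $K_{\rho,u+u'}$. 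The goal is to find a character direction where $K_{\rho,u+u'}$ carries a nontrivial first homology class, represented by a $\Sigma$-reduced cycle $\alpha$, along which $Z$ and $Z'$ interleave. With $\alpha=\{E_1,\dots,E_k\}$ ordered cyclically and a choice of maximal cone $\sigma_i$ with $\sigma_i\cap\alpha=E_i$, I would tabulate the edge-sets $\alpha(Z),\alpha(Z')$, identify the relevant indices, read off the signs $b_i$ as in Figure~\ref{fig:bi}, and sum to get
\[
	Z*_\alpha Z'=\frac{\rho'(u)}{2}\sum_{i\ \mathrm{relevant}} b_i.
\]
Exhibiting one such configuration with $Z*_\alpha Z'\neq 0$ completes the proof, since Theorem~\ref{thm:cycle} then guarantees that $\iota_\alpha^*(\omega)=(Z*_\alpha Z')\cdot\alpha_\fun\neq 0$, hence $\omega\neq 0$ in $H^2(X,\T_X)$, so the cup product does not vanish.

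\textbf{The main obstacle} I anticipate is not any single hard computation but rather the combinatorial search itself: one must hit upon a character $u^*=u+u'$ for which $K_{\rho,u^*}$ has nontrivial $H^1$, and a \emph{matched} pair of components $Z,Z'$ that wind around a common cycle $\alpha$ with a net imbalance in the $b_i$. Most naive choices will give either a contractible or empty $K_{\rho,u^*}$, or components whose interleaving cancels so that $\sum b_i=0$; so the delicate point is engineering the geometry of how $Z$ and $Z'$ sit relative to $\alpha$. Concretely I expect to fix the ray $\rho$ and a handful of candidate characters, mechanically enumerate components and cycles, and then present the successful case with an explicit table of the $b_i$ (as in Figure~\ref{fig:bi}) and the resulting nonzero sum. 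Once the right data is found, verifying $Z*_\alpha Z'\neq 0$ is purely arithmetic, and the appeal to Theorem~\ref{thm:cycle} and the smoothness check from the first step finish the argument.
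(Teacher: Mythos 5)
Your strategy is exactly the paper's: verify the fan is smooth and complete, pick $(\rho,u)$ and $(\rho',u')$ with $\rho(u)=\rho'(u')=-1$ and $\rho(u')=0$, choose components $Z,Z'$ and a $\Sigma$-reduced cycle $\alpha$ in $K_{\rho,u+u'}$, and show $Z*_\alpha Z'\neq 0$ via Theorem~\ref{thm:cycle}. There is no methodological disagreement.

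The gap is that for an existence statement proved by explicit example, the witness data \emph{is} the proof, and your proposal never produces it: you describe the search (``I would search for suitable $(\rho,u)$\dots'', ``I expect to fix the ray $\rho$ and a handful of candidate characters'') but do not name a single character, ray, component, or cycle, nor compute a single $b_i$. As you yourself note, most choices give a contractible $K_{\rho,u^*}$ or a cancelling interleaving, so the existence of a successful configuration in this particular fan is precisely what must be demonstrated and cannot be taken on faith. For the record, the paper's choice is $\rho=\rho_1$, $u=(-1,0,0)$, $\rho'=\rho_6$, $u'=(0,-1,0)$ (so $\rho_1(u')=0$ and $\rho_6(u)=-1$); $Z$ is the component of $\Gamma_{\rho_1,u}$ containing the generators of $\rho_2,\rho_3,\rho_4,\rho_5$, $Z'$ is the component of $\Gamma_{\rho_6,u'}$ consisting of the generator of $\rho_7$, and $\alpha$ is the hexagonal cycle through the generators of $\rho_8,\rho_3,\rho_4,\rho_2,\rho_7,\rho_6$, with $\sigma_i$ for the edge on rays $\rho_i,\rho_j$ taken to be the cone on $\rho_i,\rho_j,\rho_1$. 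Only the indices at the edges $\rho_4\rho_2$ and $\rho_2\rho_7$ are relevant, each with $b_i=1$, so $Z*_\alpha Z'\neq 0$ and $\iota_\alpha^*(\omega)$ is a nonzero multiple of $\alpha_\fun$. Until data of this kind is exhibited and checked, the argument establishes only that obstructedness is decidable by a finite computation, not that this $X_\Sigma$ is in fact obstructed.
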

\begin{figure}
	\begin{tikzpicture}[scale=1.5]
		\draw [fill=black] (0,0) circle [radius=.04];
		\draw [fill=black] (-.5,0) circle [radius=.04];
		\draw [fill=black] (0,-1) circle [radius=.04];
		\draw [fill=black] (-1,0) circle [radius=.04];
		\draw [fill=black] (0,1) circle [radius=.04];
		\draw [fill=black] (1,0) circle [radius=.04];
\draw (0,0) -- (1,0);
\draw (0,0) -- (-1,0);
\draw (0,0) -- (0,1);
\draw (0,0) -- (0,-1);
\draw (-.5,0) -- (0,-1);
\draw (-1,0) -- (0,-1);
\draw (-.5,0) -- (0,1);
\draw (-1,0) -- (0,1);
\draw [above right]node at (0,0) {\ \ $\rho_1$};
\draw [below]node at (-.5,0) {$\quad\quad\rho_4$};
\draw [right]node at (0,-1) {$\rho_2$};
\draw [below left]node at (-1,0) {$\rho_5$};
\draw [below right]node at (0,1) {$\rho_3$};
\draw [above right]node at (1,0) {$\  \ \rho_6$};
\draw (0,0) -- (2,2);
\draw (0,1) -- (1,2);
\draw (0,0) -- (2,-2);
\draw (0,-1) -- (1,-2);
\draw (1,0) -- (3,2);
\draw (1,0) -- (3,-2);
\draw (0,1) -- (0,2);
\draw (0,-1) -- (0,-2);
\draw (-1,0) -- (-2,0);
\draw (1,0) -- (3,0);
\draw [very thick,blue] (0,1) -- (-1,0) -- (0,-1) -- (-.5,0) -- (-0,1);
\draw [very thick,blue] (-1,0) -- (-.5,0) ;
\draw [fill=blue] (1,0) circle [radius=.08];
	\end{tikzpicture}
	\caption{$\Sigma\cap [u=-1]$ for $u=(-1,0,0)$ and $\Gamma_{\rho_1,u}$}\label{fig:1}
\end{figure}
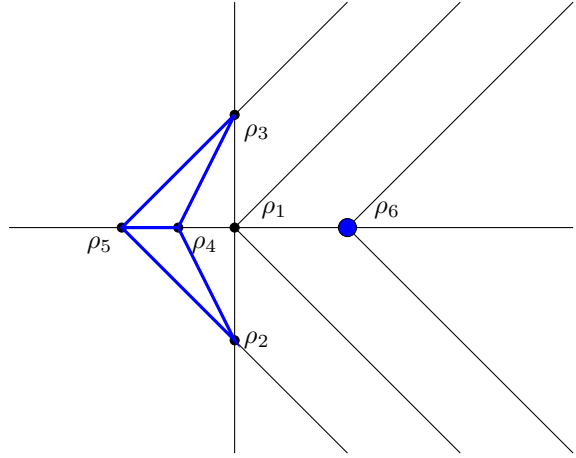

The degrees where we will look for first-order deformations are $u=(-1,0,0)$ and $u'=(0,-1,0)$. We picture the intersection of $\Sigma$ with the hyperplane 
\[
	\{v\in N_\QQ\ |\ v(u)=-1\}
\]
in Figure \ref{fig:1}. The graph $\Gamma_{\rho_1,u}$ is also pictured in the figure in blue bold lines. It has two connected components: one component contains the generators of the rays $\rho_2,\rho_3,\rho_4,\rho_5$, and the other contains the generator of $\rho_6$. We will denote the first component by $Z$.
Note that for any other choice of ray $\rho$, $\Gamma_{\rho,u}$ is connected. Hence, $H^1(X,\T_X)_u$ is one-dimensional.

\begin{figure}
	\begin{tikzpicture}[scale=1.5]
		\draw [fill=black] (1,0) circle [radius=.04];
		\draw [fill=black] (0,1) circle [radius=.04];
		\draw [fill=black] (0,-1) circle [radius=.04];
\draw [above right]node at (1,0) {\ \ $\rho_6$};
\draw [above right]node at (0,1) {\ \ $\rho_8$};
\draw [above right]node at (0,-1) {\ \ $\rho_7$};
\draw (0,1) -- (1,0) -- (0,-1);
\draw (0,-1) -- (2,-1);
\draw (0,-1) -- (1,-2);
\draw (0,1) -- (2,1);
\draw (0,1) -- (1,2);
\draw (1,0) -- (2,0);
\draw (0,-1) -- (1,-2);
\draw (0,-1) -- (-1,-1);
\draw (0,1) -- (-1,1);
\draw (1,0) -- (-1,0);
\draw [fill=blue] (0,1) circle [radius=.08];
\draw [fill=blue] (0,-1) circle [radius=.08];
	\end{tikzpicture}
	\caption{$\Sigma\cap [u'=-1]$ for $u'=(0,-1,0)$ and $\Gamma_{\rho_6,u'}$}\label{fig:2}
\end{figure}
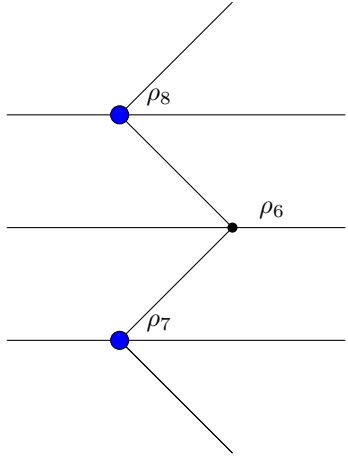

In Figure \ref{fig:2}
we picture the intersection of $\Sigma$ with the hyperplane
\[
	\{v\in N_\QQ\ |\ v(u')=-1\}
\]
along with the graph $\Gamma_{\rho_6,u'}$. This graph has two connected components, consisting of the primitive generators of $\rho_7$ and $\rho_8$. We denote the first of these components by $Z'$. Again for any other choice of ray $\rho$, $\Gamma_{\rho,u'}$ is connected, so $H^1(X,\T_X)_{u'}$ is also one-dimensional.

\begin{figure}
	\begin{tikzpicture}[scale=1.5]
\draw (-1,0) -- (.5,0) -- (1,1);
\draw (.5,0) -- (1,-1);
\draw (1,-1) -- (0,0) -- (1,1);
\draw (1,1) -- (0,1) -- (0,0) -- (0,-1) -- (1,-1);
\draw (0,1) -- (-1,0) -- (0,-1);
\draw (-1,0) -- (-2,0);
\draw (0,1) -- (-2,1);
\draw (0,-1) -- (-2,-1);
\draw (0,1) -- (0,2);
\draw (0,-1) -- (0,-2);
\draw (1,1) -- (2,2);
\draw (1,-1) -- (2,-2);
\draw (.5,0) -- (1.5,0);
%\draw [fill=blue] (0,1) circle [radius=.08];
%\draw [fill=blue] (0,-1) circle [radius=.08];
\draw [line width=3.2, lightgray](.5,0) -- (1,1) -- (0,1) -- (-1,0) -- (0,-1) -- (1,-1) -- (.5,0);
\draw [line width=1.5, lightgray,->](.5,0) -- (.8,.6); 
		\draw [fill=black] (0,0) circle [radius=.04];
		\draw [fill=black] (0,-1) circle [radius=.04];
		\draw [fill=black] (0,1) circle [radius=.04];
		\draw [fill=black] (-1,0) circle [radius=.04];
		\draw [fill=black] (.5,0) circle [radius=.04];
		\draw [fill=black] (1,1) circle [radius=.04];
		\draw [fill=black] (1,-1) circle [radius=.04];
\draw [above right]node at (.5,0) {\ \ $\rho_6$};
\draw [above left]node at (0,0) {\ \ $\rho_1$};
\draw [above left]node at (0,1) {\ \ $\rho_3$};
\draw [below left]node at (0,-1) {\ \ $\rho_2$};
\draw [below left]node at (1,-1) {\ \ $\rho_7$};
\draw [above left]node at (1,1) {\ \ $\rho_8$};
\draw [above left]node at (-1,0) {\ \ $\rho_4$};
\draw [very thick,blue] (1,1.05) -- (0,1.05) -- (-1.05,0) -- (0,-1.05) -- (1,-1.05);
\draw [very thick,red] (0,-.95) -- (.95,-.95) -- (.45,0);
\draw node at (-.7,-.7) {$(1)$};
\draw node at (.4,-1.3) {$(1)$};

\end{tikzpicture}
	\caption{$\Sigma\cap [u+u'=-1]$, $\alpha$, and $\alpha(Z)$, $\alpha(Z')$}\label{fig:3}
\end{figure}
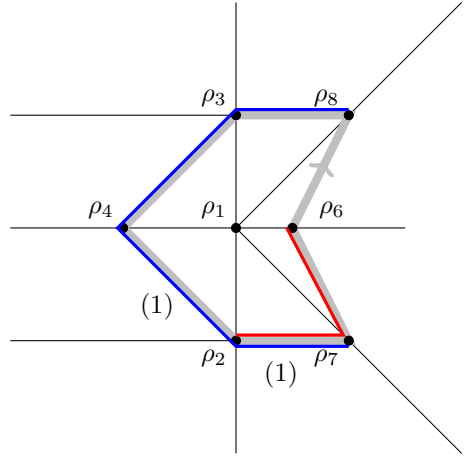

We will now compute the cup product $\omega$ of the first-order deformations corresponding to $Z$ and $Z'$. By Theorem \ref{thm:main2}, it is possible that this is non-zero, since $\rho_1(u')=0$. This class will live in degree $u+u'$, so 
we picture the intersection of $\Sigma$ with the hyperplane
\[
	\{v\in N_\QQ\ |\ v(u+u')=-1\}
\]
in Figure \ref{fig:3}.

Let $\alpha$ be the $\Sigma$-reduced cycle pictured in gray oriented in counter-clockwise direction. That is, $\alpha$ has vertices (in order) at the primitive generators of \[\rho_8,\rho_3,\rho_4,\rho_2,\rho_7,\rho_6,\rho_8.\] This is indeed $\Sigma$-reduced. To apply Theorem \ref{thm:cycle}, we must choose a maximal cone $\sigma_i\in \Sigma$ for each edge of $\alpha$. In this instance there is a canonical choice: for the edge corresponding to rays $\rho_i$ and $\rho_j$, we take the cone generated by $\rho_i,\rho_j,\rho_1$.

This gives rise to the sets $\alpha(Z)$ and $\alpha(Z')$, pictured in the figure in blue and red, respectively. The set $\alpha(Z)$ is a subgraph with vertices equal to the primitive generators of $\rho_8,\rho_3,\rho_4,\rho_2,\rho_7$, and the set $\alpha(Z')$ is a subgraph with vertices equal to the primitive generators of $\rho_2,\rho_7,\rho_6$.

The only \emph{relevant} indices are for the edges corresponding to $\rho_4,\rho_2$ and $\rho_2,\rho_7$; each contributes a value of $b_i=1$, so we obtain by Theorem \ref{thm:cycle} that
\[
\iota_\alpha^*(\omega)=1\cdot \alpha_\fun.
\]
In particular, the cup product $\omega$ of the first-order deformations corresponding to $Z$ and $Z'$ is non-zero.

\subsection*{Acknowledgements} 
Both authors were partially supported by NSERC.
The first author was partially supported by the grant 346300 for IMPAN from the Simons Foundation and the matching 2015-2019 Polish MNiSW fund.
We thank the anonymous referees for useful comments.
\bibliographystyle{alpha}
\bibliography{cup-product} 

\newcommand{\etalchar}[1]{$^{#1}$}
\begin{thebibliography}{CCG{\etalchar{+}}13}

\bibitem[Alt94]{altmann1}
Klaus Altmann.
\newblock Computation of the vector space {$T^1$} for affine toric varieties.
\newblock {\em J. Pure Appl. Algebra}, 95(3):239--259, 1994.

\bibitem[Alt95]{altmann3}
Klaus Altmann.
\newblock Minkowski sums and homogeneous deformations of toric varieties.
\newblock {\em Tohoku Math. J. (2)}, 47(2):151--184, 1995.

\bibitem[Alt97a]{altmann2}
Klaus Altmann.
\newblock Infinitesimal deformations and obstructions for toric singularities.
\newblock {\em J. Pure Appl. Algebra}, 119(3):211--235, 1997.

\bibitem[Alt97b]{altmann4}
Klaus Altmann.
\newblock The versal deformation of an isolated toric {G}orenstein singularity.
\newblock {\em Invent. Math.}, 128(3):443--479, 1997.

\bibitem[Bos13]{bosch}
Siegfried Bosch.
\newblock {\em Algebraic geometry and commutative algebra}.
\newblock Universitext. Springer, London, 2013.

\bibitem[CCG{\etalchar{+}}13]{coats}
Tom Coates, Alessio Corti, Sergey Galkin, Vasily Golyshev, and Alexander
  Kasprzyk.
\newblock Mirror symmetry and {F}ano manifolds.
\newblock In {\em European {C}ongress of {M}athematics}, pages 285--300. Eur.
  Math. Soc., Z\"urich, 2013.

\bibitem[CLS11]{cls}
David~A. Cox, John~B. Little, and Henry~K. Schenck.
\newblock {\em Toric varieties}, volume 124 of {\em Graduate Studies in
  Mathematics}.
\newblock American Mathematical Society, Providence, RI, 2011.

\bibitem[Fil]{matej}
Matej Filip.
\newblock The {G}erstenhaber product {$\mathrm{HH}^2(A)\times
  \mathrm{HH}^2(A)\to \mathrm{HH}^3(A)$} of affine toric varieties and
  {P}oisson deformations.
\newblock arXiv:1803.07486 [math.AG].

\bibitem[Ful93]{fulton}
William Fulton.
\newblock {\em Introduction to toric varieties}, volume 131 of {\em Annals of
  Mathematics Studies}.
\newblock Princeton University Press, Princeton, NJ, 1993.
\newblock The William H. Roever Lectures in Geometry.

\bibitem[God58]{godement}
Roger Godement.
\newblock {\em Topologie alg\'ebrique et th\'eorie des faisceaux}.
\newblock Actualit'es Sci. Ind. No. 1252. Publ. Math. Univ. Strasbourg. No. 13.
  Hermann, Paris, 1958.

\bibitem[Har77]{hartshorne}
Robin Hartshorne.
\newblock {\em Algebraic geometry}.
\newblock Springer-Verlag, New York-Heidelberg, 1977.
\newblock Graduate Texts in Mathematics, No. 52.

\bibitem[Ilt11]{ilten1}
Nathan~Owen Ilten.
\newblock Deformations of smooth toric surfaces.
\newblock {\em Manuscripta Math.}, 134(1-2):123--137, 2011.

\bibitem[IS17]{kstab}
Nathan Ilten and Hendrik S\"uss.
\newblock K-stability for {F}ano manifolds with torus action of complexity 1.
\newblock {\em Duke Math. J.}, 166(1):177--204, 2017.

\bibitem[IV12]{ilten2}
Nathan~Owen Ilten and Robert Vollmert.
\newblock Deformations of rational {$T$}-varieties.
\newblock {\em J. Algebraic Geom.}, 21(3):531--562, 2012.

\bibitem[Jac94]{jac}
Krzysztof Jaczewski.
\newblock Generalized {E}uler sequence and toric varieties.
\newblock In {\em Classification of algebraic varieties ({L}'{A}quila, 1992)},
  volume 162 of {\em Contemp. Math.}, pages 227--247. Amer. Math. Soc.,
  Providence, RI, 1994.

\bibitem[Mav]{mavlyutov2}
Anvar Mavlyutov.
\newblock Deformations of toric varieties via minkowski sum decompositions of
  polyhedral complexes.
\newblock arXiv:0902.0967 [math.AG].

\bibitem[Mav04]{mavlyutov1}
Anvar~R. Mavlyutov.
\newblock Deformations of {C}alabi-{Y}au hypersurfaces arising from
  deformations of toric varieties.
\newblock {\em Invent. Math.}, 157(3):621--633, 2004.

\bibitem[Pet]{petracci}
Andrea Petracci.
\newblock Homogeneous deformations of toric pairs.
\newblock arXiv:1801.05732 [math.AG].

\bibitem[RT14]{tipler}
Yann Rollin and Carl Tipler.
\newblock Deformations of extremal toric manifolds.
\newblock {\em J. Geom. Anal.}, 24(4):1929--1958, 2014.

\bibitem[Tia87]{tian}
Gang Tian.
\newblock Smoothness of the universal deformation space of compact
  {C}alabi-{Y}au manifolds and its {P}etersson-{W}eil metric.
\newblock In {\em Mathematical aspects of string theory ({S}an {D}iego,
  {C}alif., 1986)}, volume~1 of {\em Adv. Ser. Math. Phys.}, pages 629--646.
  World Sci. Publishing, Singapore, 1987.

\bibitem[Tod89]{todorov}
Andrey~N. Todorov.
\newblock The {W}eil-{P}etersson geometry of the moduli space of {${\rm
  SU}(n\geq 3)$} ({C}alabi-{Y}au) manifolds. {I}.
\newblock {\em Comm. Math. Phys.}, 126(2):325--346, 1989.

\bibitem[Vak06]{vakil}
Ravi Vakil.
\newblock Murphy's law in algebraic geometry: badly-behaved deformation spaces.
\newblock {\em Invent. Math.}, 164(3):569--590, 2006.

\end{thebibliography}
\end{document}